\def\NAT@spacechar{~}
\definecolor{darkblue}{rgb}{0,0,.6}
\theoremstyle{plain}
\newtheorem{thm}{Theorem}
\newtheorem{assumption}{Assumption}
\newtheorem{lemma}{Lemma}
\newcommand \eg{\varepsilon}
\theoremstyle{remark}
\newtheorem*{rmrk*}{Remark}
\newtheorem{rmrk}{Remark}
\newcommand{\opnorm}[1]{\vert\kern-0.25ex\vert\kern-0.25ex\vert #1\vert\kern-0.25ex\vert\kern-0.25ex\vert}
\newcommand{\opnormBig}[1]{\Bigl\vert\kern-0.25ex\Bigl\vert\kern-0.25ex\Bigl\vert#1\Bigr\vert\kern-0.25ex\Bigr\vert\kern-0.25ex\Bigr\vert}
\title{A general white noise test based on kernel lag-window estimates of the spectral density operator}
\author[a]{Vaidotas Characiejus}
\author[b]{Gregory Rice}
\affil[a]{D\'epartement de math\'ematique, Universit\'e libre de Bruxelles, Belgium}
\affil[b]{Department of Statistics and Actuarial Science, University of Waterloo, Canada}
\begin{document}







\maketitle

\begin{abstract}
\noindent We propose a general white noise test for functional time series based on estimating a distance between the spectral density operator of a weakly stationary time series and the constant spectral density operator of an uncorrelated time series. The estimator that we propose is based on a kernel lag-window type estimator of the spectral density operator. When the observed time series is a strong white noise in a real separable Hilbert space, we show that the asymptotic distribution of the test statistic is standard normal, and we further show that the test statistic diverges for general serially correlated time series. These results recover as  special cases those of Hong~(1996) and Horv\'ath et al. (2013). In order to implement the test, we propose and study a number of kernel and bandwidth choices, including a new data adaptive bandwidth, as well as data adaptive power transformations of the test statistic that improve the normal approximation in finite samples. A simulation study demonstrated that the proposed method has good size and improved power when compared to other methods available in the literature, while also offering a light computational burden.

\medskip
\noindent \textbf{Keywords:} time series, functional data, serial correlation, spectral density operator, kernel estimator.

\medskip
\noindent \textbf{MSC2010:} 62M10, 62G10, 62M15.
\end{abstract}

\section{Introduction}
With this paper, we aim to contribute to the growing literature on general white noise tests for functional time series. The scope of methodology for analyzing time series taking values in a function space has grown substantially over the last two decades; see \cite{bosq:2000} for seminal work on linear processes in function space, and \cite{hormann:kokoszka:2012}, as well as Chapters 13--16 of \cite{HKbook} for more recent reviews.  Most methods are still founded on non-parametric and non-likelihood based approaches that rely on the estimation of autocovariance operators to quantify serial dependence in the sequence; see, e.g., \cite{kargin:onatski:2008, klepsch:kluppelberg:2017, panaretos:tavakoli:2013, zhang:shao:2015}. It is then often of interest to measure whether or not an observed sequence of curves, or model residuals, exhibit significant autocorrelation.

Quite a large number of methods have been introduced in order to test for serial correlation in the setting of functional time series. These methods build upon a well developed literature on portmanteau tests for scalar and multivariate time series, see e.g.  \cite{li:2004} and Chapter 5 of \cite{francq:zakoian:2010} for a summary, and can be grouped along the classical time series dichotomy of time domain and spectral domain approaches. In the time domain, \cite{gabrys:kokoszka:2007} proposed a method based on applying a multivariate portmanteau test to vectors of scores obtained from projecting the functional time series into a few principal directions. \cite{horvath:huskova:rice:2013} and \cite{kokoszka:rice:shang:2017} develop portmanteau tests based directly on the norms of empirical autocovariance operators. In the spectral domain, \cite{zhang:2016} and \cite{bagchi:characiejus:dette:2018} develop test statistics based on measuring the difference between periodogram operators and the spectral density operator under the assumption that the sequence is a weak white noise. Among these, the tests of \cite{horvath:huskova:rice:2013}, \cite{zhang:2016} and \cite{bagchi:characiejus:dette:2018} are general white noise tests in the sense that they are asymptotically consistent against serial correlation at any lag, whereas the other tests cited utilize only the autocovariance information up to a user selected maximum lag, and do not have more than trivial power for serial correlation occurring beyond that lag.

Each of these statistics are indelibly connected in that they reduce to weighted sums of semi-norms of empirical autocovariance operators, and hence it is natural to consider alternate ways of choosing the weights in order to increase power; see \cite{escanciano:lobato:2009} for a nice discussion of these connections for scalar portmanteau tests. The statistics developed in \cite{zhang:2016} and \cite{bagchi:characiejus:dette:2018}, which are based on the periodogram operator, effectively give equal weights to the autocovariance operators at all lags. One might expect then that reweighting the periodogram so that it is a more efficient estimate of the spectral density operator might lead to  more powerful tests. A sensible and well studied method of choosing these weights is to employ a kernel and lag-window or bandwidth as is common in nonparametric spectral density operator estimation. \cite{panaretos:tavakoli:2013} were the first to put forward similar estimators of the spectral density operator based on smoothing the periodogram operator for $L^2([0,1],\mathbb R)$-valued time series. \cite{hong:1996} developed general white noise tests for scalar sequences based on this principle by comparing kernel lag-window estimates of the spectral density to the constant spectral density of a weak white noise, which were further studied and extended in \cite{shao:2011} and \cite{shao:2011:2}.

In this paper, we develop a general white noise test based on kernel lag-window estimators of the spectral density operator of a time series with values in a separable Hilbert space~$\mathbb H$. Under standard conditions on the kernel and bandwidth used, we show that the estimated distance between the spectral density operator and the constant spectral density operator based on such estimators can be normalized to have an asymptotic standard normal distribution when the observed series is a strong white noise. We further show that this standardized distance diverges in probability to infinity at a quantifiable rate under general departures from a weak white noise.

These results compare to and generalize the main results of \cite{horvath:huskova:rice:2013}. Letting $n$ denote the length of the functional time series and $p_n$ denote the bandwidth parameter, the main result of \cite{horvath:huskova:rice:2013} establishes the asymptotic normality of the unweighted sum of the norms of empirical autocovariance operators up to a bandwidth $p_n$ satisfying $p_n\to \infty$, $p_n=O(\log(n))$, as $n\to \infty$. Our main result establishes this under the less restrictive conditions on the bandwidth $p_n\to \infty$, $p_n=o(n)$, while also allowing for a general class of weights in the sum. This condition on the bandwidth is optimal in the sense that without it the corresponding kernel lag-window estimator of the spectral density cannot generally be consistent in mean squared norm sense. These improvements owe primarily to a new, more general, method of proof relying on a martingale central limit theorem.

Implementing the test requires the choice of a kernel and bandwidth, and we suggest several methods for this, including a new data driven and kernel adaptive bandwidth. We also investigate power transformations of our test statistics that improve their size properties in finite samples. We investigated these results and various choices of the kernel and bandwidth by means of a Monte Carlo simulation study, which confirm that the proposed tests have good size as well as power exceeding currently available tests, although with the drawback that they are not built for general white noise (e.g.\ functional conditionally heteroscedastic) series.

The rest of the paper is organized as follows. We present in Section~\ref{sec:main} our main methodological contributions and theory, including all asymptotic results for the proposed test statistic and its power transformation. In Section~\ref{simul}, we discuss some details of implementing the proposed test, and present the results of a Monte Carlo simulation study. The practical utility of the proposed tests are illustrated in Section~\ref{sec:emp} with an application to Eurodollar futures curves. Some concluding remarks are given in Section~\ref{sec:conc}, and all proofs as well as the definition of the data adaptive bandwidth are given in the Appendix following the references.


\section{Statement of method and main results}\label{sec:main}
We first define some notation that is used throughout the paper. Suppose that $\mathbb H$ is a real separable Hilbert space with the inner product $\langle\cdot,\cdot\rangle:\mathbb H\times\mathbb H\to\mathbb R$. For $x,y\in\mathbb H$, the tensor of $x$ and $y$ is a rank one operator $x\otimes y:\mathbb H\to\mathbb H$ defined by $(x\otimes y)(z)=\langle z,y\rangle x$ for each $z\in\mathbb H$.  Let $\{u_t\}_{t\in\mathbb Z}$ be a second-order stationary sequence of $\mathbb H$-valued random elements. Throughout we make use of the following moment conditions:
\begin{assumption}\label{assumption:moment}
 $\operatorname Eu_t=0$ and $\operatorname E\|u_t\|^4<\infty$ for each $t\in\mathbb Z$.
\end{assumption}
The autocovariance operators of  $\{u_t\}_{t\in\mathbb Z}$ are defined by
\begin{equation}\label{eq:dfn_covop}
	\mathcal C(j)
	=\operatorname E[u_j\otimes u_0],
	\quad j\in\mathbb Z.
\end{equation}
The spectral density operator $\mathcal F(\omega)$ with $\omega\in[-\pi,\pi]$ is the discrete-time Fourier transform of the sequence of autocovariance operators $\{\mathcal C(j)\}_{j\in\mathbb Z}$ defined by
\begin{equation}\label{eq:dfn_sdoper}
	\mathcal F(\omega)
	=(2\pi)^{-1}\sum_{j\in\mathbb Z}\mathcal C(j)e^{-\mathbf ij\omega},
	\quad\omega\in[-\pi,\pi],
\end{equation}
where $\mathbf i=\sqrt{-1}$. $\mathcal F(\omega)$ is well-defined for $\omega\in[-\pi,\pi]$ provided that $\sum_{j\in\mathbb Z}\vert\kern-0.25ex\vert\kern-0.25ex\vert\mathcal C(j)\vert\kern-0.25ex\vert\kern-0.25ex\vert_2<\infty$, where $\vert\kern-0.25ex\vert\kern-0.25ex\vert\cdot\vert\kern-0.25ex\vert\kern-0.25ex\vert_2$ is the Hilbert-Schmidt norm. We say that $\{u_t\}_{t\in \mathbb{Z}}$ is a weak white noise if $\mathcal C(j)=0$ for $j\ne 0$, and evidently in this case $\mathcal F(\omega)=(2\pi)^{-1}\mathcal C(0)$ is constant as a function of $\omega\in[-\pi,\pi]$.

\subsection{Definition of test statistic and null asymptotics}
In order to measure the proximity of a given functional time series process $\{u_t\}_{t\in\mathbb Z}$ to a white noise, it is natural then to consider the distance $Q$, in terms of integrated normed error, between the spectral density operator $\mathcal{F}(\omega)$, $\omega\in[-\pi,\pi]$, and $(2\pi)^{-1}\mathcal C(0)$:
\[
	Q^2
	=2\pi\int_{-\pi}^\pi\opnorm{\mathcal F(\omega)-(2\pi)^{-1}\mathcal C(0)}_2^2d\omega.
\]
$Q$ specifically measures how far the second-order structure of $\{u_t\}_{t\in\mathbb Z}$  deviates from that of a weak white noise. Given a sample $u_1,\ldots,u_n$ from $\{u_t\}_{t\in \mathbb{Z}}$, we are then interested in testing the hypothesis
\[
	H_0:\ Q=0\quad\text{versus}\quad H_1:Q>0.
\]

\cite{zhang:2016} and \cite{bagchi:characiejus:dette:2018} develop general tests of $H_0$ versus $H_1$. In this paper, we focus on testing $H_1$ versus the stronger but still relevant hypothesis
$$
H_{0,\text{iid}}: \mbox{the sequence $\{u_t\}_{t\in\mathbb Z}$ is independent and identically distributed in $\mathbb{H}$}.
$$
We discuss in Section \ref{sec:conc} how one might adapt the proposed test statistic to generally test $H_0$ versus $H_1$, but we do not pursue this in detail here. One can estimate $Q$ via estimates of $\mathcal{F}(\omega)$, $\omega\in[-\pi,\pi]$, and $\mathcal{C}(0)$. The sample autocovariance operators are defined by
\begin{equation}\label{eq:dfn_covest}
	\hat{\mathcal C}_n(j)
	=n^{-1}\sum_{t=j+1}^nu_t\otimes u_{t-j},\quad0\le j<n,
\end{equation}
and $\hat{\mathcal C}_n(j)=\hat{\mathcal C}_n^*(-j)$ for $-n<j<0$, where $^*$ denotes the adjoint of an operator. The spectral density operator may be estimated using a kernel lag-window estimator defined by
\begin{equation}\label{eq:dfn_sdoper_est}
	\hat{\mathcal F}_n(\omega)
	=(2\pi)^{-1}\sum_{|j|<n}k(j/p_n)\hat{\mathcal C}_n(j)e^{-\mathbf ij\omega},\quad\omega\in[-\pi,\pi],
\end{equation}
where $k:\mathbb R\to[-1,1]$ is a kernel and $p_n$ is the lag-window or bandwidth parameter.  We make the following assumptions on the kernel $k$ and $p_n$.
\begin{assumption}\label{assumption:kernel}
$k:\mathbb R\to[-1,1]$ is a symmetric function that is continuous at zero and at all but finite number of points, with $k(0)=1$ and $k(x)=O(x^{-\alpha})$, for some $\alpha > 1/2$ as $x\to\infty$.
\end{assumption}
\begin{assumption}\label{assumption:band}
$p_n$ satisfies that $p_n\to\infty$ and $p_n/n\to0$ as $n\to\infty$.
\end{assumption}
\autoref{assumption:kernel} covers all typically used kernels in the literature on spectral density estimation, and guarantees that integrals of the form $\int_{-\infty}^{\infty} k^r(x)dx$ are finite, for $r=2,4$. The above estimates yield an estimate of $Q$ defined by
\[
	\hat Q_n^2
	=2\pi\int_{-\pi}^\pi\opnorm{\hat{\mathcal F}_n(\omega)-(2\pi)^{-1}\hat{\mathcal C}_n(0)}_2^2d\omega.
\]
Using the fact that $\opnorm{\cdot}_2^2=\langle \cdot,\cdot\rangle_{\mathrm{HS}}$, where $\langle \cdot,\cdot\rangle_{\mathrm{HS}}$ is the Hilbert-Schmidt inner product and the fact that the functions $\{e_k\}_{k\in\mathbb Z}$ defined by $e_k(x)=e^{-{\mathbf i}kx}$ for $k\in\mathbb Z$ and $x\in[-\pi,\pi]$ are orthonormal in $L^2[-\pi,\pi]$, we obtain
\[
	\hat Q_n^2
	=\sum_{0<|j|<n}k^2(j/p_n)\opnorm{\hat{\mathcal C}_n(j)}_2^2.
\]
Further, since the kernel $k$ is symmetric, $\hat{\mathcal C}_n(-j)=\hat{\mathcal C}_n^*(j)$, and $\opnorm{\hat{\mathcal C}_n^*(j)}_2=\opnorm{\hat{\mathcal C}_n(j)}_2$, we also have that
\[
	\hat Q_n^2
	=2\sum_{j=1}^{n-1}k^2(j/p_n)\opnorm{\hat{\mathcal C}_n(j)}_2^2.
\]
Remarkably, $\hat Q_n^2$ can be normalized under these general conditions in order to satisfy the central limit theorem under $H_{0,\text{iid}}$, and we now proceed by defining the normalizing sequences and constants needed to do so. Let $\sigma^2=\operatorname E\|u_0\|^2$, $\mu_4=\operatorname E\|u_0\|^4$ and $\hat\sigma_n^2=n^{-1}\sum_{t=1}^n\|u_t\|^2$. Also, let us denote
\[
	C_n(k)=\sum_{j=1}^{n-1}(1-j/n)k^2(j/p_n)
	\quad\text{and}\quad
	D_n(k)=\sum_{j=1}^{n-2}(1-j/n)(1-(j+1)/n)k^4(j/p_n).
\]
We propose to use the test statistic $T_n$  defined by
\begin{equation}\label{eq:teststat}
	T_n=T_n(k,p_n)
	=\frac{2^{-1}n\hat Q_n^2-\hat\sigma_n^4C_n(k)}{\opnorm{\hat{\mathcal C}_n(0)}_2^2\sqrt{2D_n(k)}},\quad n\ge1.
\end{equation}
We now state our main result, which establishes the asymptotic normality of $T_n$.
\begin{thm}\label{thm:main}
Suppose that $\{u_t\}_{t\in\mathbb Z}$ satisfies $H_{0,\text{iid}}$ and that Assumptions~\ref{assumption:moment}, \ref{assumption:kernel}, and \ref{assumption:band} hold. Then
\begin{equation}\label{eq:mainthm}
	T_n
	\xrightarrow{d}N(0,1)\quad\text{as}\quad n\to\infty.
\end{equation}
\end{thm}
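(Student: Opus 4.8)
The plan is to split $2^{-1}n\hat Q_n^2$ into a diagonal and an off-diagonal part, absorb the diagonal part into the centering term $\hat\sigma_n^4C_n(k)$, and establish the limit for the off-diagonal part via a martingale central limit theorem. Substituting $\hat{\mathcal C}_n(j)=n^{-1}\sum_{t=j+1}^nu_t\otimes u_{t-j}$ into the identity $\hat Q_n^2=2\sum_{j=1}^{n-1}k^2(j/p_n)\opnorm{\hat{\mathcal C}_n(j)}_2^2$ and expanding the Hilbert--Schmidt norm gives
\[
	2^{-1}n\hat Q_n^2
	=n^{-1}\sum_{j=1}^{n-1}k^2(j/p_n)\sum_{t=j+1}^{n}\sum_{s=j+1}^{n}\langle u_t,u_s\rangle\langle u_{t-j},u_{s-j}\rangle
	=B_n+A_n,
\]
where $B_n=n^{-1}\sum_{j=1}^{n-1}k^2(j/p_n)\sum_{t=j+1}^n\|u_t\|^2\|u_{t-j}\|^2$ gathers the terms with $t=s$ and, by the symmetry $t\leftrightarrow s$, $A_n=2n^{-1}\sum_{1\le j<s<t\le n}k^2(j/p_n)\langle u_t,u_s\rangle\langle u_{t-j},u_{s-j}\rangle$. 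With $\mathcal F_t=\sigma(u_1,\dots,u_t)$ one can rewrite $A_n=\sum_{t=3}^nD_{n,t}$, where $D_{n,t}=2n^{-1}\langle u_t,L_{n,t}\rangle$ and $L_{n,t}=\sum_{s=2}^{t-1}\sum_{j=1}^{s-1}k^2(j/p_n)\langle u_{t-j},u_{s-j}\rangle u_s$. Every index appearing in $L_{n,t}$ is at most $t-1$, so $L_{n,t}$ is $\mathcal F_{t-1}$-measurable, and under $H_{0,\mathrm{iid}}$ the element $u_t$ is independent of $\mathcal F_{t-1}$ with $\operatorname Eu_t=0$; hence $\{D_{n,t},\mathcal F_t\}_{t=3}^n$ is a martingale difference array.

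Next I would deal with $B_n$ and the normalizing quantities. Since $\|u_t\|^2$ and $\|u_{t-j}\|^2$ are independent for $j\ge1$, $\operatorname EB_n=\sigma^4C_n(k)$, and a fourth-moment computation (finite by Assumption~\ref{assumption:moment}) gives $\operatorname{Var}(B_n)=O(p_n^2/n)$. Together with $\hat\sigma_n^4=\sigma^4+O_{\mathbb P}(n^{-1/2})$ and $C_n(k)=O(p_n)$, Assumption~\ref{assumption:band} then yields $B_n-\hat\sigma_n^4C_n(k)=o_{\mathbb P}(\sqrt{p_n})$. A Riemann-sum argument under Assumptions~\ref{assumption:kernel}--\ref{assumption:band} shows $C_n(k)\sim p_n\int_0^\infty k^2(x)\,dx$ and $D_n(k)\sim p_n\int_0^\infty k^4(x)\,dx$, so that $B_n-\hat\sigma_n^4C_n(k)=o_{\mathbb P}(\sqrt{D_n(k)})$; and $\hat{\mathcal C}_n(0)\to\mathcal C(0)$ almost surely in Hilbert--Schmidt norm gives $\opnorm{\hat{\mathcal C}_n(0)}_2^2\to\opnorm{\mathcal C(0)}_2^2>0$. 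Finally, evaluating $s_n^2:=\sum_{t=3}^n\operatorname E[D_{n,t}^2]$ and retaining only the surviving index configurations — the dominant one being $s=s'$, $j=j'$ with $u_s,u_{t-j},u_{s-j}$ pairwise non-identical, for which $\operatorname E[\langle u_t,u_s\rangle^2\langle u_{t-j},u_{s-j}\rangle^2]=\opnorm{\mathcal C(0)}_2^4$ — one obtains $s_n^2=2\opnorm{\mathcal C(0)}_2^4D_n(k)(1+o(1))$, which is precisely why $D_n(k)$ appears in the denominator of $T_n$. By Slutsky's theorem these facts reduce the theorem to the statement $s_n^{-1}\sum_{t=3}^nD_{n,t}\xrightarrow{d}N(0,1)$.

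This last limit follows from a standard martingale central limit theorem (e.g.\ Corollary~3.1 of Hall and Heyde) once one checks (i) $s_n^{-4}\sum_{t=3}^n\operatorname E[D_{n,t}^4]\to0$, which gives the conditional Lindeberg condition, and (ii) $s_n^{-2}\sum_{t=3}^n\operatorname E[D_{n,t}^2\mid\mathcal F_{t-1}]\xrightarrow{\mathbb P}1$. Because $u_t$ is independent of the $\mathcal F_{t-1}$-measurable element $L_{n,t}$, one has $\operatorname E[D_{n,t}^4\mid\mathcal F_{t-1}]\le16n^{-4}\mu_4\|L_{n,t}\|^4$ and $\operatorname E[D_{n,t}^2\mid\mathcal F_{t-1}]=4n^{-2}\langle\mathcal C(0)L_{n,t},L_{n,t}\rangle$. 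For (i) it then suffices to prove $\sum_t\operatorname E\|L_{n,t}\|^4=o(n^4p_n^2)$, which I would do by expanding $\|L_{n,t}\|^4$, using $|\langle u_a,u_b\rangle|\le\|u_a\|\,\|u_b\|$ to bound each surviving configuration by a product of norms with finite expectation under Assumption~\ref{assumption:moment}, and estimating the resulting lag sums via $k(x)=O(x^{-\alpha})$ with $\alpha>1/2$. Condition (ii), which I expect to be the crux of the proof, amounts to showing that $4n^{-2}\sum_t\langle\mathcal C(0)L_{n,t},L_{n,t}\rangle$ — a quantity whose mean is exactly $s_n^2$ — has variance $o(s_n^4)=o(D_n(k)^2)$. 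The obstacle is that $\langle\mathcal C(0)L_{n,t},L_{n,t}\rangle$ is a polynomial of degree six in the $u_t$'s, so this variance involves expectations of products of up to twelve of them; the argument hinges on verifying that every index configuration contributing at leading order forces the $u_t$'s to pair up in a manner requiring no more than the fourth moments of Assumption~\ref{assumption:moment}, after which a count of the admissible configurations — with all lag sums controlled by $p_n$ through the kernel decay — yields $o(D_n(k)^2)$. The bandwidth condition $p_n=o(n)$ enters exactly in this count (and in the estimate of $B_n-\hat\sigma_n^4C_n(k)$).
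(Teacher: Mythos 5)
Your overall architecture — split $2^{-1}n\hat Q_n^2$ into the diagonal part $B_n$ (absorbed, up to $o_p(\sqrt{p_n})$, by the centering $\hat\sigma_n^4C_n(k)$), identify the off-diagonal part as a sum of martingale differences, compute $s_n^2\sim 2\opnorm{\mathcal C(0)}_2^4D_n(k)$, and finish with Slutsky and a martingale CLT — is exactly the paper's strategy. The gap is in how you propose to verify the CLT conditions. You apply the martingale CLT directly to the \emph{untruncated} increments $D_{n,t}=2n^{-1}\langle u_t,L_{n,t}\rangle$ with $L_{n,t}=\sum_{s=2}^{t-1}\sum_{j=1}^{s-1}k^2(j/p_n)\langle u_{t-j},u_{s-j}\rangle u_s$, and you want (i) $s_n^{-4}\sum_t\operatorname E[D_{n,t}^4]\to0$ via $\operatorname E\|L_{n,t}\|^4$, and (ii) a variance bound for $\sum_t\langle\mathcal C(0)L_{n,t},L_{n,t}\rangle$ involving products of twelve $u$'s. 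Neither quantity is controlled — indeed neither need be finite — under Assumption~\ref{assumption:moment}, which only gives $\operatorname E\|u_0\|^4<\infty$. The culprit is the overlapping-index configurations that your $L_{n,t}$ admits because it mixes all lags $j\le s-1$: taking $j=t-s$ (a lag carrying kernel weight of order one whenever $t-s\lesssim p_n$) makes $u_s$ appear \emph{quadratically} in $L_{n,t}$ through the summand $\langle u_{t-j},u_{s-j}\rangle u_s=\langle u_s,u_{2s-t}\rangle u_s$. Consequently $\|L_{n,t}\|^4$ (and $D_{n,t}^4$) contains terms of the form $\|u_s\|^8\|u_{2s-t}\|^4$; already in the scalar case, if $\operatorname Eu^4<\infty$ but $\operatorname Eu^8=\infty$, one checks by conditioning on everything except $u_s$ that $\operatorname E[D_{n,t}^4]=\infty$. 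So your Lyapunov route for the Lindeberg condition cannot be carried out, and the same eighth-moment configurations contaminate the twelve-factor expectations in your step (ii): the issue is not that these configurations are too numerous to count, but that their expectations are not even finite under the stated assumptions. Your hope that ``every configuration contributing at leading order forces the $u_t$'s to pair up in a manner requiring no more than the fourth moments'' is precisely what fails.

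The paper's proof is built to avoid exactly this. It introduces an intermediate sequence $l_n$ with $p_n/l_n+l_n/n\to0$, discards in $L^2$ the pieces of the off-diagonal sum with lags $j>l_n$ or time gaps $t-s\le l_n$ (the terms $V_{1n},\dots,V_{4n}$; their second-moment bounds only ever put multiplicity at most four on any single $u_a$, so $\mu_4$ suffices), and applies Brown's martingale CLT to $U_{nt}=2\sum_{j\le l_n}k_{nj}^2\langle Z_{jt},H_{j,t-l_n-1}\rangle$, where $H_{j,t-l_n-1}=\sum_{s=l_n+2}^{t-l_n-1}Z_{js}$. The point of the truncation is that the ``current'' pair $(u_t,u_{t-j})$, whose indices are at least $t-l_n$, is independent of the past block, whose indices are at most $t-l_n-1$; hence no index can collide across the two groups, every moment computation (including $\operatorname E\opnorm{H_{j,t-l_n-1}}_2^4=O(t^2)$ and the analysis of the conditional variance via the terms $A_{1nt},A_{2nt},A_{3nt}$) involves each $u_a$ with multiplicity at most four, and Assumption~\ref{assumption:moment} is enough. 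To complete your proof you would need either to add such a blocking/truncation device (or truncate the $u_t$'s themselves), or to strengthen the moment assumption to eighth moments — the latter proving a weaker theorem than the one stated.
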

\begin{rmrk}
Let us observe that
\begin{equation}\label{eq:prefactor}
	T_n
	=\frac{\hat\sigma_n^4}{\opnorm{\hat{\mathcal C}_n(0)}_2^2}\cdot\frac{2^{-1}n\hat\sigma_n^{-4}\hat Q_n^2-C_n(k)}{\sqrt{2D_n(k)}},
	\quad n\ge1.
\end{equation}
If $\mathbb H=\mathbb R$, then it follows that $\hat\sigma_n^2/\opnorm{\hat{\mathcal C}_n(0)}_2 \stackrel{p}{\to} 1$ as $n\to \infty$, and we recover precisely the test statistic proposed by \cite{hong:1996} (see the statistic $M_{1n}$ on page 840 of \cite{hong:1996}). In a general real separable Hilbert space $\mathbb H$, $\hat\sigma_n^2/\opnorm{\hat{\mathcal C}_n(0)}_2$ converges in probability to $\sigma^2/\opnorm{\mathcal C(0)}_2$, which need not equal one.  Intuitively, the structure ``inside'' $u_t$ determines the value of $\sigma^2/\opnorm{\mathcal C(0)}_2$.
\end{rmrk}
\begin{rmrk}\label{rem:d-def}
Under Assumptions \ref{assumption:kernel} and \ref{assumption:band},  we have that $D_n(k)= D(k)p_n + o(p_n)$ as $n\to\infty$, where $D(k)=\int_0^\infty k^4(z)dz$.
\end{rmrk}

\begin{rmrk}\label{rem:horv-rem} Taking the kernel $k$ to be the truncated kernel,  $k = \mathds{1}_{\{|x|\le 1\}}$, one has that
\[
	\hat Q_n^2
	=2\sum_{j=1}^{p_n}\opnorm{\hat{\mathcal C}_n(j)}_2^2.
\]
Additionally if $p_n/\sqrt{n} \to 0$, then also  $C_{n}(k)/p_n \to 1$, and $D_n(k)/p_n\to 1$ as $n\to \infty$. It follows then  under these and the conditions of \autoref{thm:main} that the statistic $T_n$ is asymptotically equivalent with
$$
T_n^{*} = \frac{n\sum_{j=1}^{p_n}\opnorm{\hat{\mathcal C}_n(j)}_2^2 - \hat\sigma_n^4p_n}{\opnorm{\hat{\mathcal C}_n(0)}_2^2\sqrt{2p_n}}
$$
which is identical to the statistic considered in \cite{horvath:huskova:rice:2013}. It was shown there that this statistic has a normal limit under $H_{0,\text{iid}}$ and the assumption that $p_n\to \infty$, $p_n=O(\log(n))$ as $n\to \infty$. Therefore \autoref{thm:main} can be viewed as a generalisation of their result.
\end{rmrk}

\subsection{Transformation of test statistic}
\citet{chen2004} investigated the finite sample performance of the test statistic of \citet{hong:1996}, and found that the sampling distribution of the test statistic under $H_{0,iid}$ tends to be right skewed, causing the test to be oversized. We confirm their findings in our simulation study in Section~\ref{simul} as the test statistic $T_n$ suffers from the same problem in the general Hilbert space setting as well. In order to alleviate this, \citet{chen2004} suggests a power transformation of the test statistic of the form
\[
	T_n^\beta=
	\frac{(2^{-1}n\hat\sigma_n^{-4}\hat Q_n^2)^\beta-[C_n^\beta(k)+2^{-1}\beta(\beta-1)C_n^{\beta-2}(k)\hat\sigma_n^{-8}\opnorm{\hat{\mathcal C}_n(0)}_2^42D_n(k)]}
	{\beta C_n^{\beta-1}(k)\hat\sigma_n^{-4}\opnorm{\hat{\mathcal C}_n(0)}_2^2[2D_n(k)]^{1/2}}
\]
for $n\ge1$ with $\beta\ne0$. It follows from \autoref{thm:main} and the application of the delta method that $T_n^\beta\xrightarrow{d}N(0,1)$ as $n\to\infty$ for $\beta\ne0$.

 \citet{chen2004} then recommend to choose the value of $\beta$ in order to make approximate skewness of $T_n^\beta$ equal to zero. In order to describe how to achieve this in our setting, let us suppose that $Z$ is a Gaussian random element with values in $S_2(\mathbb H)$ such that $\operatorname EZ=0$ and the covariance operator of $Z$ is given by $\mathcal A=\operatorname E[u_0\otimes u_1\tilde\otimes u_0\otimes u_1]$, where $S_2(\mathbb H)$ is the space of the Hilbert-Schmidt operators from $\mathbb H$ to $\mathbb H$ and $\tilde\otimes$ denotes the tensor of two elements of $S_2(\mathbb H)$. As a result of the central limit theorem applied to the estimated covariance operators $\hat{\mathcal C}_n(j)$, $2^{-1}n\hat Q_n^2$ is approximately a weighted sum of independent and identically distributed random variables with the distribution of $\opnorm{Z}_2^2$. From this we can obtain the approximate skewness of $(2^{-1}n\hat Q_n^2)^\beta$ using Taylor's theorem (see \citet{chen2004} for more details). The value of $\beta$ which makes the approximate skewness equal to zero is given by
\begin{equation}\label{eq:beta}
	\beta^*
	=1-\frac{\mu_1'\mu_3}{3\mu_2^2}\frac{[\sum_{j=1}^{n-1}k^2(j/p_n)][\sum_{j=1}^{n-1}k^6(j/p_n)]}{[\sum_{j=1}^{n-1}k^4(j/p_n)]^2},
\end{equation}
where $\mu_1'=\operatorname E\opnorm{Z}_2^2$, $\mu_2=\operatorname{Var\opnorm{Z}_2^2}$ and $\mu_3=\operatorname E[\opnorm{Z}_2^2-\operatorname E\opnorm{Z}_2^2]^3$.

If $\mathbb H=\mathbb R$, $\opnorm{Z}_2^2$ is distributed as a scaled $\chi^2(1)$ random variable, and then $\mu_1'\mu_3/(3\mu_2^2)=2/3$. Consequently, $\beta^*$ in the univariate case only depends on the sample size $n$, the kernel $k$ and the bandwidth $p_n$. The choice of $\mu_1'\mu_3/(3\mu_2^2)=2/3$ even in the general setting can be motivated by using a Welch--Satterthwaite style approximation of the norm of a Gaussian process $\opnorm{Z}_2^2$ with a scaled chi-squared distribution; see for instance  \citet{zhang:2013} and \citet{krishnamoorthy:2016}. However, in the general Hilbert space setting the value of $\mu_1'\mu_3/(3\mu_2^2)$ depends on the data and should be estimated. Let us suppose that $\{v_k\}_{k\ge1}$ are the eigenvectors of $\mathcal A$ with the corresponding eigenvalues $\{\lambda_k\}_{k\ge1}$. We have that
\[
	\opnorm{Z}_2^2
	=\sum_{k\ge1}|\langle Z,v_k\rangle_{\mathrm{HS}}|^2
	=\sum_{k\ge1}\lambda_k|\lambda_k^{-1/2}\langle Z,v_k\rangle_{\mathrm{HS}}|^2
	=\sum_{k\ge1}\lambda_k\xi_k,
\]
where $\{\xi_k\}_{k\ge1}$ are independent and identically distributed $\chi^2(1)$ random variables. It follows by calculating the cumulant generating function of $\opnorm{Z}_2^2$ that
\[
	\mu_1'
	=\sum_{k\ge1}\lambda_k
	=\operatorname{Tr}\mathcal A,
	\quad
	\mu_2
	=2\sum_{k\ge1}\lambda_k^2
	=2\operatorname{Tr}\mathcal A^2
	\quad\text{and}\quad
	\mu_3
	=8\sum_{k\ge1}\lambda_k^3
	=8\operatorname{Tr}\mathcal A^3,
\]
where $\operatorname{Tr}$ denotes the trace of an operator and $\mathcal A^k$ denotes the $k$-fold compositon of the operator with itself. By \autoref{lemma:covcov} in Appendix \ref{sec:app:pt},
\[
	\operatorname{Tr}\mathcal A
	=[\operatorname{Tr}\mathcal C(0)]^2,
	\quad
	\operatorname{Tr}\mathcal A^2
	=[\operatorname{Tr}\mathcal C^2(0)]^2
	\quad\text{and}\quad
	\operatorname{Tr}\mathcal A^3
	=[\operatorname{Tr}\mathcal C^3(0)]^2
\]
and we can estimate $\mu_1'$, $\mu_2$ and $\mu_3$ by estimating $\mathcal C(0)$ using the estimator $\hat{\mathcal C}_n(0)$ given by~\eqref{eq:dfn_covest}. It follows that the plug-in estimators of $\mu_1'$, $\mu_2$ and $\mu_3$ are given by
\begin{equation}\label{eq:mu1mu2est}
	\hat\mu_1'
	=n^{-1}\sum_{t=1}^n\|u_t\|^2,
	\quad
	\hat\mu_2
	=n^{-2}\sum_{s,t=1}^n|\langle u_t,u_s\rangle|^2
\end{equation}
and
\begin{equation}\label{eq:mu3est}
	\hat\mu_3
	=n^{-3}\sum_{r,s,t=1}^n\langle u_t,u_s\rangle\langle u_s,u_r\rangle\langle u_r,u_t\rangle.
\end{equation}
These results motivate two tests of $H_{0,\text{iid}}$ each with asymptotic size $\alpha \in (0,1)$: to Reject $H_{0,\text{iid}}$ if $T_n \ge \Phi^{-1}(1-\alpha)$ or $T_n^\beta\ge\Phi^{-1}(1-\alpha)$ with  $\beta\ne0$, where $\Phi^{-1}(1-\alpha)$ is the $1-\alpha$ quantile of the standard normal distribution. The finite sample properties of each of these tests for various choices of the bandwidth and kernel, as well as a comparison of these tests to existing methods, are presented in Section \ref{simul}.

\subsection{Consistency}
We now establish the consistency of our test under $H_1$. In order to describe time series $\{u_t\}_{t\in \mathbb{Z}}$ that are generally weakly dependent, we follow \cite{tavakoli2014} and introduce cumulant summability conditions. Suppose that $\{u_t\}_{t\in\mathbb Z}$ is a sequence of $\mathbb H$-valued random elements. We say that $\{u_t\}_{t\in\mathbb Z}$ is $k$-th order stationary with $k\ge1$ if $\operatorname E\|u_t\|^k<\infty$ for all $t\in\mathbb Z$ and, for all $t_1,\ldots,t_l\in\mathbb Z$ and $l=1,\ldots,k$,
\[
	\operatorname E[u_{t_1}\otimes u_{t_2}\otimes\ldots\otimes u_{t_l}]
	=\operatorname E[u_0\otimes u_{t_2-t_1}\otimes\ldots\otimes u_{t_l-t_1}].
\]
The joint cumulant of real or complex-valued random variables $X_1,\ldots,X_k$ such that $\operatorname E|X_j|^k<\infty$ for all $j=1,\ldots,k$ with $k\ge1$ is given by
\[
	\operatorname{cum}(X_1,\ldots,X_k)
	=\sum_{\pi}(|\pi|-1)!(-1)^{|\pi|-1}\prod_{B\in\pi}\operatorname E\Bigl[\prod_{i\in B}X_i\Bigr],
\]
where $\pi$ runs through the list of all partitions of $\{1,\ldots,n\}$, $B$ runs through the list of all blocks of the partition $\pi$, and $|\pi|$ is the number of parts in the partition; see \cite{brillinger2001} for more details. Suppose that $u_1,\ldots,u_k$ are $\mathbb H$-valued random variables such that $\operatorname E\|u_j\|^k<\infty$ for all $j=1,\ldots,k$. The $k$-th order cumulant $\operatorname{cum}(u_1,\ldots,u_k)$ is a unique element in $\bigotimes_{j=1}^k\mathbb H$ that satisfies
\[
	\langle\operatorname{cum}(u_1,\ldots,u_k),f_1\otimes\ldots\otimes f_k\rangle
	=\operatorname{cum}(\langle u_1,f_1\rangle,\ldots,\langle u_k,f_k\rangle)
\]
for all $f_1,\ldots,f_k\in\mathbb H$. Suppose that $\{u_t\}_{t\in\mathbb Z}$ is $2k$-th order stationary and let us denote
\[
	\mathcal K_{t_1,\ldots,t_{2k-1}}=\operatorname{cum}(u_{t_1},\ldots,u_{t_{2k-1}},u_{t_0})
\]
for $t_1,\ldots,t_{2k-1}\in\mathbb Z$. Then $\mathcal K_{t_1,\ldots,t_{2k-1}}$ is an operator that maps the elements of $\bigotimes_{j=1}^k\mathbb H$ to $\bigotimes_{j=1}^k\mathbb H$ such that
\begin{multline}\label{eq:cumoper}
	\langle\mathcal K_{t_1,\ldots,t_{2k-1}}(f_{k+1}\otimes\ldots\otimes f_{2k}),f_1\otimes\ldots\otimes f_k\rangle\\
	=\operatorname{cum}(\langle u_{t_1},f_1\rangle,\ldots,\langle u_{t_{2k-1}},f_{2k-1}\rangle,\langle u_0,f_{2k}\rangle)
\end{multline}
for all $f_1,\ldots,f_{2k}\in\mathbb H$. The following assumption describes the allowable strength of the serial dependence of the $u_t$'s.
\begin{assumption}\label{assumption:dependentseq}
$\{u_t\}_{t\in\mathbb Z}$ is a fourth order stationary sequence of zero mean random elements with values in a real separable Hilbert space $\mathbb H$ such that $\sum_{j=-\infty}^\infty\opnorm{\mathcal C(j)}_1^2<\infty$ and $\sup_{j\in\mathbb Z}\sum_{h=-\infty}^\infty\opnorm{\mathcal K_{h+j,h,j}}_1<\infty$, where $\opnorm{\cdot}_1$ is the nuclear norm.
\end{assumption}
\autoref{assumption:dependentseq} is essentially a generalisation of the assumption used by \cite[Assumption~A.4]{hong:1996} to a real separable Hilbert space $\mathbb H$. A similar assumption is used by \cite[Assumption~3.1]{zhang:2016}. The next theorem establishes the consistency of our test, which shows that, under the alternative hypothesis, the rate at which $T_n$ diverges to infinity as $n\to\infty$ is $n/p_n^{1/2}$.
\begin{thm}\label{thm:consistency}
Suppose that $\{u_t\}_{t\in\mathbb Z}$ satisfies \autoref{assumption:dependentseq} and \autoref{assumption:kernel}, \ref{assumption:band} hold. Then
\[
	(p_n^{1/2}/n)T_n
	\xrightarrow{p}
	\frac{2^{-1}Q^2}{\opnorm{\mathcal C(0)}_2^2(2D(k))^{1/2}}\quad\text{as}\quad n\to\infty,
\]
where $D(k)$ is defined in \autoref{rem:d-def}.

\end{thm}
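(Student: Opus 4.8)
The plan is to show that the numerator and denominator of $T_n$, suitably rescaled, converge to the claimed deterministic limits. Recall that
\[
T_n = \frac{2^{-1}n\hat Q_n^2 - \hat\sigma_n^4 C_n(k)}{\opnorm{\hat{\mathcal C}_n(0)}_2^2\sqrt{2D_n(k)}},
\qquad
\hat Q_n^2 = 2\sum_{j=1}^{n-1}k^2(j/p_n)\opnorm{\hat{\mathcal C}_n(j)}_2^2 .
\]
Multiplying by $p_n^{1/2}/n$, the denominator becomes $\opnorm{\hat{\mathcal C}_n(0)}_2^2 \sqrt{2 D_n(k)/p_n}$, and by \autoref{rem:d-def} we have $D_n(k)/p_n \to D(k)$, while a law of large numbers for i.i.d.-like fourth-order stationary sequences (using $\mathcal C(0)$ continuity and \autoref{assumption:dependentseq}) gives $\opnorm{\hat{\mathcal C}_n(0)}_2^2 \xrightarrow{p} \opnorm{\mathcal C(0)}_2^2$. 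So the rescaled denominator converges in probability to $\opnorm{\mathcal C(0)}_2^2\sqrt{2D(k)}$, and it remains only to handle the numerator.

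First I would show $(p_n^{1/2}/n)\cdot \hat\sigma_n^4 C_n(k) \xrightarrow{p} 0$. Indeed $\hat\sigma_n^4 = O_p(1)$, and $C_n(k) = \sum_{j=1}^{n-1}(1-j/n)k^2(j/p_n)$ is of order $p_n$ (by the same Riemann-sum argument as in \autoref{rem:d-def}, with $\int_0^\infty k^2$ finite by \autoref{assumption:kernel}), so the whole term is $O_p(p_n^{3/2}/n) = o_p(1)$ because $p_n = o(n)$ implies $p_n^{3/2}/n = (p_n/n)^{1/2}(p_n/n) \cdot n^{1/2}$... more carefully, $p_n^{3/2}/n \to 0$ iff $p_n = o(n^{2/3})$, which is \emph{not} guaranteed. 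So this needs a slightly more careful bound: using $k^2(j/p_n) = O((j/p_n)^{-2\alpha})$ for $j$ large together with $\alpha > 1/2$, one gets $C_n(k) = O(p_n)$ with the implied constant uniform, and $p_n^{3/2}/n \to 0$ only under $p_n = o(n^{2/3})$; to avoid this, note instead that we will show the main term $(p_n^{1/2}/n)\cdot 2^{-1}n\hat Q_n^2 \to \text{const}$, and that $C_n(k)$ is the centering appropriate under $H_{0,\mathrm{iid}}$ where $\opnorm{\hat{\mathcal C}_n(j)}_2^2 = O_p(1/n)$, so $n \hat Q_n^2$ itself is only $O_p(p_n)$ there — but under $H_1$ the autocovariances do not vanish, $\opnorm{\hat{\mathcal C}_n(j)}_2 \xrightarrow{p} \opnorm{\mathcal C(j)}_2$, hence $n\hat Q_n^2$ is of exact order $n$, dominating $\hat\sigma_n^4 C_n(k) = O_p(p_n)$ once divided appropriately. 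Concretely $(p_n^{1/2}/n)\hat\sigma_n^4 C_n(k) = O_p(p_n^{3/2}/n)$, and since $2^{-1}(p_n^{1/2}/n)n\hat Q_n^2$ will be shown to be $O_p(p_n^{1/2})\cdot$ (something)... the cleaner route is: divide the centering term by $p_n^{1/2}$, getting $O_p(p_n^{1/2}/n) \cdot \hat\sigma_n^4 \cdot (C_n(k)/p_n^{1/2})$; but $C_n(k) \sim D_2(k)p_n$, so this is $O_p(p_n^{3/2}/n)$ again. I will instead argue directly that the dominant term is $2^{-1}(p_n^{1/2}/n)\, n\, \hat Q_n^2$ relative to which the centering is lower order, by establishing the convergence of $\hat Q_n^2$ below and comparing rates.

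The heart of the proof is to show
\[
\frac{p_n^{1/2}}{n}\cdot \frac{n}{2}\hat Q_n^2
= \frac{p_n^{1/2}}{2}\sum_{j=1}^{n-1} 2\,k^2(j/p_n)\opnorm{\hat{\mathcal C}_n(j)}_2^2
\xrightarrow{p} \frac{1}{2}Q^2
= \pi\int_{-\pi}^\pi \opnorm{\mathcal F(\omega) - (2\pi)^{-1}\mathcal C(0)}_2^2\,d\omega
= \frac12\sum_{j\ne 0}\opnorm{\mathcal C(j)}_2^2 .
\]
Wait — this cannot be right dimensionally: the left side carries a factor $p_n^{1/2}\to\infty$. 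The resolution is that under $H_1$ the estimator $\hat{\mathcal C}_n(j)$ is \emph{not} consistent for $\mathcal C(j)$ uniformly in $j$ up to $n$; rather $\sum_j k^2(j/p_n)\opnorm{\hat{\mathcal C}_n(j)}_2^2$ splits into a \emph{bias-type} part $\sum_j k^2(j/p_n)\opnorm{\mathcal C(j)}_2^2 \to \sum_j \opnorm{\mathcal C(j)}_2^2$ (since $k^2(j/p_n)\to 1$ pointwise and is dominated, using \autoref{assumption:dependentseq}'s summability $\sum\opnorm{\mathcal C(j)}_1^2 < \infty$ which controls $\sum \opnorm{\mathcal C(j)}_2^2$) plus a \emph{variance-type} part of order $p_n/n$, and cross terms of order $(p_n/n)^{1/2}$. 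Hence $\hat Q_n^2 = 2\sum_{j\ne 0}\opnorm{\mathcal C(j)}_2^2 + o_p(1) + O_p((p_n/n)^{1/2}) = Q^2/(2\pi)\cdot 2\pi\cdot\ldots$; identifying constants via the Parseval identity $2\pi\int_{-\pi}^\pi\opnorm{\mathcal F(\omega)-(2\pi)^{-1}\mathcal C(0)}_2^2 d\omega = \sum_{j\ne0}\opnorm{\mathcal C(j)}_2^2$, so $\hat Q_n^2 \xrightarrow{p} Q^2$. Therefore $(p_n^{1/2}/n)\cdot 2^{-1}n\hat Q_n^2 = 2^{-1}p_n^{1/2}\hat Q_n^2 \sim 2^{-1}p_n^{1/2}Q^2$ which diverges; dividing by the (rescaled) denominator $\opnorm{\mathcal C(0)}_2^2\sqrt{2D(k)}$ and noting the centering term $(p_n^{1/2}/n)\hat\sigma_n^4 C_n(k) = O_p(p_n^{3/2}/n)$ is of smaller order than $p_n^{1/2}$ precisely when $p_n/n\to 0$ — i.e.\ $p_n^{3/2}/n = p_n^{1/2}\cdot(p_n/n) = o(p_n^{1/2})$ — we get $(p_n^{1/2}/n)T_n = 2^{-1}p_n^{1/2}Q^2/(\opnorm{\mathcal C(0)}_2^2\sqrt{2D(k)}) \cdot (1+o_p(1))$, hmm, but that still has a $p_n^{1/2}$. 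I must be misreading the intended normalization: the theorem states $(p_n^{1/2}/n)T_n$ converges, and $T_n$ already divides by $\sqrt{2D_n(k)} = O(p_n^{1/2})$, so $T_n$ itself under $H_1$ is of order $n\hat Q_n^2/(p_n^{1/2}) = O(n/p_n^{1/2})$, hence $(p_n^{1/2}/n)T_n = O(1)$ — consistent. So the correct bookkeeping: $(p_n^{1/2}/n)T_n = [2^{-1}n\hat Q_n^2 - \hat\sigma_n^4 C_n(k)]/[\opnorm{\hat{\mathcal C}_n(0)}_2^2\sqrt{2D_n(k)}]\cdot(p_n^{1/2}/n) = [2^{-1}\hat Q_n^2 - \hat\sigma_n^4 C_n(k)/n]/[\opnorm{\hat{\mathcal C}_n(0)}_2^2\sqrt{2D_n(k)/p_n}]$, and now $C_n(k)/n = O(p_n/n)\to 0$, $D_n(k)/p_n\to D(k)$, $\hat Q_n^2 \xrightarrow{p} Q^2$, $\opnorm{\hat{\mathcal C}_n(0)}_2^2\xrightarrow{p}\opnorm{\mathcal C(0)}_2^2$, yielding the stated limit.

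So the ordered steps are: (i) establish the ergodic-type convergence $\hat{\mathcal C}_n(j)\xrightarrow{p}\mathcal C(j)$ and $\hat{\mathcal C}_n(0)\xrightarrow{p}\mathcal C(0)$ in Hilbert–Schmidt norm for fixed $j$, and more importantly (ii) prove $\hat Q_n^2 = 2\sum_{j=1}^{n-1}k^2(j/p_n)\opnorm{\hat{\mathcal C}_n(j)}_2^2 \xrightarrow{p} 2\sum_{j=1}^\infty\opnorm{\mathcal C(j)}_2^2 = Q^2$ by splitting the sum at a slowly growing cutoff, using dominated convergence on the low-lag part and an $L^1$ or second-moment bound (via the cumulant condition $\sup_j\sum_h\opnorm{\mathcal K_{h+j,h,j}}_1<\infty$, which controls $\operatorname{Var}\opnorm{\hat{\mathcal C}_n(j)}_2^2$ and the tail contribution) on the high-lag part; (iii) note $\hat\sigma_n^4 C_n(k)/n = O_p(p_n/n) = o_p(1)$ and $D_n(k)/p_n\to D(k)$, $\opnorm{\hat{\mathcal C}_n(0)}_2^2\to\opnorm{\mathcal C(0)}_2^2$; (iv) combine via the continuous mapping theorem. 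The main obstacle is step (ii): one must control $\sum_{j=1}^{n-1}k^2(j/p_n)\bigl(\opnorm{\hat{\mathcal C}_n(j)}_2^2 - \opnorm{\mathcal C(j)}_2^2\bigr)$ uniformly enough that the $O(n)$ many terms, each with an $O(1/n)$-scale fluctuation, sum to $o_p(1)$; this requires the fourth-order cumulant summability of \autoref{assumption:dependentseq} to bound the variance of the sum and the decay $k(x) = O(x^{-\alpha})$ with $\alpha>1/2$ (so $\sum k^2 < \infty$ after rescaling, preventing the tail lags from contributing) — this is exactly the analogue of Hong's consistency argument and is where the bulk of the technical work lies.
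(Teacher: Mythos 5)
Your final bookkeeping and limit identification are correct, and after the self-corrections your argument is essentially the paper's proof: rewrite $(p_n^{1/2}/n)T_n$ as $[2^{-1}\hat Q_n^2-\hat\sigma_n^4 n^{-1}C_n(k)]/[\opnorm{\hat{\mathcal C}_n(0)}_2^2(2p_n^{-1}D_n(k))^{1/2}]$, use $n^{-1}C_n(k)\to0$, $p_n^{-1}D_n(k)\to D(k)$, $\opnorm{\hat{\mathcal C}_n(0)}_2^2\xrightarrow{p}\opnorm{\mathcal C(0)}_2^2$, and reduce everything to $\hat Q_n^2\xrightarrow{p}Q^2$. Your plan for that last step (uniform-in-$j$ $O(n^{-1})$ second-moment bound on $\hat{\mathcal C}_n(j)-\mathcal C(j)$ from the cumulant summability, dominated convergence to handle $k^2(j/p_n)\to1$, and Cauchy--Schwarz for the cross terms) is exactly what the paper's Lemmas on $\sup_j\operatorname E\opnorm{\hat{\mathcal C}_n(j)-\mathcal C(j)}_2^2$ and on the mean integrated squared error of $\hat{\mathcal F}_n$ carry out, the only cosmetic difference being that the paper packages the lag-domain split through Parseval as consistency of $\hat{\mathcal F}_n$ in integrated Hilbert--Schmidt norm.
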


\section{Simulation study}\label{simul}

We now present the results of a simulation study which aimed to evaluate the performance of the tests based on $T_n$ and $T_n^\beta$ in finite samples, and to compare the proposed method to some other approaches available in the literature. For this we take the Hilbert space $\mathbb{H}$ to be $L^2([0,1],\mathbb{R})$, i.e.\ the space of equivalence classes of almost everywhere equal square-integrable real valued functions defined on $[0,1]$. In particular, we applied the proposed method to simulated samples from the following data generating processes (DGP's):

\begin{enumerate}
  \item[a)] IID-BM: $u_i(t)=W_{i}(t)$, where $\left\{W_{i}(t),\; u\in[0,1]\right\}_{i=1}^\infty$ is a sequence of independent and identically distributed standard Brownian motions.
  \item[b)] fGARCH(1,1): $u_i(t)$ satisfies

\begin{align}\label{fgarch-form}
 u_i(t) = \sigma_i(t)\varepsilon_i(t), \quad
 \sigma_i^2(t) = \delta(t) + \alpha(u_{i-1}^2)(t) + \beta(\sigma_{i-1}^2)(t),
\end{align}
where $\alpha$ and $\beta$ are integral operators defined, for $x\in L^2([0,1],\mathbb{R})$ and $t\in [0,1]$, by
\begin{equation*}
(\alpha x)(t) = \int 12t(1-t)s(1-s)x(s)ds, \quad
(\beta x)(t) = \int 12t(1-t)s(1-s)x(s)ds,
\end{equation*}
 $\delta = 0.01$ (a constant function), and
\begin{equation*}
\epsilon_i(t) = \frac{\sqrt{\ln(2)}}{2^{200t}}B_i\left(\frac{2^{400t}}{\ln 2}\right), \quad t\in [0,1],
\end{equation*}
where $\left\{B_i(t), t \in [0,1]\right\}_{i\in \mathbb{Z}}$ are independent and identically distributed Brownian bridges.
\item[c)] FAR($1$, $S$)-BM: $u_i(t) = \int \psi_c(t, s) u_{i-1}(s)ds + \eg_i(t)$, $u \in [0,1],$ where $\eg_i$ follows IID-BM, and
$\psi_c(t,s) = c\exp\{ ( t^2 + s^2)/2\}$.
The constant $c$ is then chosen so that $\|\psi_c\|=S\in (0,1)$.
\end{enumerate}

Evidently the data generated from IID-BM satisfies both $H_0$ and $H_{0,\text{iid}}$. The functional GARCH process that we study here was introduced in \cite{aue:horvath:pellatt:2017}, and the particular settings of the operators and error process are meant to imitate high-frequency intraday returns. This process satisfies $H_0$ but not $H_{0,\text{iid}}$. The FAR(1, $S$)-BM satisfies neither $H_0$ nor $H_{0,\text{iid}}$, and we study in particular the case when $S=0.3$ in order to compare to the results in \cite{zhang:2016}. Each random function was generated on 100 equally spaced points on the $[0,1]$ interval, and for the DGP's fGARCH(1,1) and FAR(1, $S$)-BM a burn-in sample of length 30 was generated prior to generating a sample of length $n$.

The tests based on $T_n$ and $T_n^\beta$ require the choice of a kernel function and bandwidth, which we now discuss. The problem of kernel and bandwidth selection for nonparametric spectral density estimation enjoys an enormous literature, going back to the seminal work of \cite{bartlett:1950} and \cite{parzen:1957}. This problem has recently received  attention for general function space valued time series in, for example, \cite{panaretos:tavakoli:2013} and \cite{rice:shang:2017}. Their theoretical and empirical findings each support using standard kernel functions with corresponding bandwidths tuned to the order or ``flatness" of the kernel near the origin. \cite{rice:shang:2017} showed that some further gains can be achieved in terms of estimation error of the spectral density operator at frequency zero by utilizing data driven bandwidths. With this in mind, we considered the following kernel functions
\begin{align*}
k_{\text{B}}(x)&= \left\{ \begin{array}{ll}
         1-|x| & \mbox{for $|x| \leq 1$};\\
        0 & \mbox{otherwise}.\end{array} \right. \tag{Bartlett} \\
k_{\text{P}}(x) &= \left\{ \begin{array}{ll}
 1-6x^2 + 6|x|^3 & \mbox{for $0\leq |x|\leq \frac{1}{2}$}; \\
 2(1-|x|)^3 & \mbox{for $\frac{1}{2}\leq |x|\leq 1$}; \\
 0 & \mbox{otherwise}. \end{array} \right. \tag{Parzen} \\
 k_{\text{D}}(x) &= \frac{\sin(\pi x)}{\pi x}, \;\; x\in \mathbb{R}/ \{0\},\;\;\; k_{\text{D}}(0)=1 . \tag{Daniell}
\end{align*}

The respective orders of these kernels are $1$,$2$, and $2$. For each of these kernels, we considered bandwidths of the form $p_n =  n^{1/(2q+1)} $ where $q$ is the order of the respective kernel, as well as bandwidths of the form $p_n =  \hat{M} n^{1/(2q+1)} $ where $\hat{M}$ is a constant estimated from the data aiming to adapt the size of the bandwidth to the serial dependence of the observed time series. In particular, it aims to minimize the integrated normed error of the spectral density operator based on a pilot estimates of $\mathcal{F}$ and its generalized derivative. The details of this data driven bandwidth selection are given in Appendix~\ref{sec:band} below.

We consider two different choices of $\beta$ for $T_n^\beta$: $\beta^*$ given by \eqref{eq:beta} when $\mu_1'\mu_3/(3\mu_2^2)=2/3$ and when $\mu_1'\mu_3/(3\mu_2^2)$ is estimated from the data using \eqref{eq:mu1mu2est} and \eqref{eq:mu3est}. We denote the former choice of $\beta$ by $\beta_1^*$ and the latter case by $\hat\beta^*$ below.

Our simulations show that the values of $\beta_1^*$ and $\hat\beta^*$ heavily depend on the choice of the kernel. The sample size and the choice of the bandwidth affect the values of $\beta_1^*$ and $\hat\beta^*$ to a much lesser extent. We obtain similar values of $\hat\beta^*$ for the different DGP's that we consider. $\beta_1^*\approx1/5$ when the kernel is either the Bartlett kernel or the Parzen kernel and $\beta_1^*\approx1/7$ when the kernel is the Daniell kernel. $\hat\beta^*\approx-1/8$ when the kernel is the Bartlett kernel, $\hat\beta^*\approx-1/28$ when the kernel is the Parzen kernel and $\hat\beta^*\approx-1/4$ for the Daniell kernel.

In addition to the proposed statistics, we also compared to the statistic/test $Z_n(b)$ of \cite{zhang:2016} and to the statistic/test $BCD_n$ of \citet{bagchi:characiejus:dette:2018}. \citet{zhang:2016} utilizes the choice of a block size $b$ for a block bootstrap. We present the results for  $b=10$ given the relative similarity in performance for different values of $b$.

The number of rejections from 1000 independent simulations with nominal levels of 5\% and 1\% are reported in Table \ref{res_tab}. From this we can draw the following conclusions about the test:

\begin{enumerate}
  \item The tests based on $T_n$ tend to be oversized. Histograms and summaries of the distribution of the test statistic for data generated according to IID-BM indicate that the distribution is right skewed relative to the standard normal distribution, but has approximately mean zero and variance one. These findings are consistent with the findings of \citet{chen2004} and \citet{horvath:huskova:rice:2013}. Applying the power transformation to $T_n$ corrects for this fairly well in finite samples. Our simulation results show that $T_n^{\beta_1^*}$ is well sized, although $T_n^{\hat\beta^*}$ seems to be slightly undersized.
  \item Regarding kernel and bandwidth selection, no one kernel or bandwidth setting displayed substantially superior performance. There were only negligible differences in the results when comparing the data driven bandwidth with standard bandwidths for the DGP's considered. In terms of size all kernels exhibited similar performance,
although the Parzen kernel exhibited slightly higher power relative to other kernels.
  \item The tests based on $T_n$, $T_n^{\beta_1^*}$ or $T_n^{\hat\beta^*}$ are not appropriately sized for the fGARCH(1,1) DGP, which might be expected since these statistics are not adjusted in any way to handle general weak white noise sequences. We provide some further discussion on this in Section \ref{sec:conc}. By contrast the tests of \cite{zhang:2016} and \citet{bagchi:characiejus:dette:2018} are built for such sequences, although the test of \citet{bagchi:characiejus:dette:2018} seems to be undersized.

  \item The tests based on $T_n$, $T_n^{\beta_1^*}$ and $T_n^{\hat\beta^*}$ exhibited higher power when compared to the test $Z_n$ or $BCD_n$ for FAR(1,0.3)-BM data, especially at the level of 1\%.
  Another advantage of the proposed approach over the test of \citet{zhang:2016} is its reduced computational burden. Using a $64$-bit implementation of $\texttt{R}$ running on Windows 10 with an Intel i3-380M (2.53 GHz) processor, one single calculation of the test statistic and/or $p$-value based on $T_n$ or $T_n^{\beta_0}$  when $n=100$ takes less than $1$ second, whereas calculating $Z_n(10)$ with the same data takes several minutes.

  \item In practice, we recommend the use of the statistic $T_n^{\beta_1^*}$ with the data driven bandwidth for general white noise testing as long as conditional heteroscedasticity is not thought to be an issue. In the case when conditional heteroscedasiticy is of concern, the statistic of \cite{zhang:2016} or \citet{bagchi:characiejus:dette:2018} is expected to give more reliable results.
\end{enumerate}

\setlength{\tabcolsep}{4pt}
{\small
\begin{table}[ht]
\centering
\begin{tabular}{rrrrrrrrrrrrrrr}
\multicolumn{1}{c}{DGP:}& \multicolumn{4}{c}{IID-BM} & & \multicolumn{4}{c}{fGARCH(1,1)} & & \multicolumn{4}{c}{FAR(1,0.3)-BM}  \\
 \cmidrule{2-5}\cmidrule{7-10}\cmidrule{12-15}
& \multicolumn{2}{c}{$n=100$} & \multicolumn{2}{c}{$n=250$} && \multicolumn{2}{c}{$n=100$} & \multicolumn{2}{c}{$n=250$} && \multicolumn{2}{c}{$n=100$} & \multicolumn{2}{c}{$n=250$}   \\
Stat/Nominal Size & 5\% & 1\% & 5\% & 1\% && 5\% & 1\% & 5\% & 1\% && 5\% & 1\% & 5\% & 1\% \\
  \hline
$T_n(k_B, n^{1/3} )$ & 50 & 23 & 67 & 34 && 113 & 71 & 142 & 78 && 824 & 749 & 995 & 993 \\
 $T_n(k_B, \hat{M}n^{1/3} )$ & 58 & 24 & 70 & 43 && 110 & 72 & 118 & 73 && 860 & 786 & 998 & 997 \\
 $T_n(k_P, n^{1/5} )$ & 57 & 23 & 63 & 38 && 110 & 72 & 130 & 74 && 868 & 788 & 997 & 996 \\
 $T_n(k_P, \hat{M}n^{1/5} )$& 58 & 22 & 68 & 41 && 111 & 73 & 121 & 71 && 851 & 776 & 997 & 997 \\
  $T_n(k_D, n^{1/5} )$ & 54 & 24 & 63 & 36 && 112 & 69 & 134 & 76 && 860 & 782 & 997 & 995 \\
  $T_n(k_D, \hat{M}n^{1/5} )$ & 58 & 21 & 71 & 29 && 112 & 70 & 127 & 71 && 833 & 761 & 998 & 994 \\
  \hline
  $T_n^{\beta_1^*}(k_B, n^{1/3} )$ & 34 & 6 & 51 & 14 && 91 & 41 & 112 & 35 && 793 & 611 & 995 & 977 \\
  $T_n^{\beta_1^*}(k_B, \hat{M}n^{1/3} )$ & 36 & 8 & 52 & 14 && 86 & 34 & 93 & 25 && 822 & 629 & 997 & 988 \\
  $T_n^{\beta_1^*}(k_P, n^{1/5} )$ & 38 & 10 & 53 & 13 && 87 & 35 & 110 & 31 && 834 & 634 & 997 & 990 \\
 $T_n^{\beta_1^*}(k_P, \hat{M}n^{1/5} )$ & 34 & 7 & 53 & 14 && 86 & 35 & 95 & 25 && 812 & 629 & 997 & 989 \\
  $T_n^{\beta_1^*}(k_D, n^{1/5} )$ & 32 & 6 & 53 & 12 && 80 & 33 & 102 & 28 && 815 & 610 & 997 & 985 \\
  $T_n^{\beta_1^*}(k_D, \hat{M}n^{1/5} )$ & 37 & 4 & 53 & 12 && 82 & 30 & 97 & 23 && 800 & 603 & 997 & 976 \\
  \hline
  $T_n^{\hat\beta^*}(k_B, n^{1/3} )$ & 26 & 2 & 46 & 7 && 80 & 24 & 94 & 17 && 777 & 476 & 993 & 954 \\
  $T_n^{\hat\beta^*}(k_B, \hat{M}n^{1/3} )$ & 31 & 2 & 49 & 4 && 80 & 21 & 86 & 11 && 801 & 482 & 997 & 959 \\
  $T_n^{\hat\beta^*}(k_P, n^{1/5} )$ & 36 & 2 & 44 & 4 && 78 & 22 & 91 & 16 && 813 & 485 & 997 & 960 \\
  $T_n^{\hat\beta^*}(k_P, \hat{M}n^{1/5} )$ & 26 & 2 & 48 & 4 && 80 & 22 & 86 & 10 && 796 & 482 & 997 & 959 \\
  $T_n^{\hat\beta^*}(k_D, n^{1/5} )$ & 26 & 2 & 40 & 4 && 72 & 18 & 85 & 13 && 785 & 416 & 996 & 948 \\
  $T_n^{\hat\beta^*}(k_D, \hat{M}n^{1/5} )$ & 25 & 2 & 44 & 5 && 73 & 18 & 79 & 8 && 773 & 436 & 996 & 946 \\
  \hline
  $Z_n(10)$ & 48 & 9 & 49 & 11 && 50 & 12 & 41 & 5 && 708 & 386 & 992 & 913 \\
  $BCD_n$ & 9 & 0 & 25 & 5 && 24 & 4 & 37 & 4 && 124 & 43 & 376 & 174 \\
   \hline
\end{tabular}\caption{Rejections from 1000 independent simulations with nominal levels of $5\%$ and $1\%$ for each method and DGP considered.}\label{res_tab}
\end{table}
}

\section{Empirical example}\label{sec:emp}
In order to illustrate the utility of the proposed tests, we present here the results of an application to daily Eurodollar futures curves. A Eurodollar futures contract represents an obligation to deliver 1,000,000 USD to a bank outside of the United States at a specified time, and their prices are given as values between zero and 100 defining the interest rate on the transaction. The specific data that we consider are daily settlement prices available at monthly delivery dates for the first six months, and quarterly delivery dates for up to 10 years into the future. Following \cite{kargin:onatski:2008}, we transformed this raw data into smooth curves using cubic $B$-splines, and these curves were reevaluated at 30 day ``monthly" intervals to produce the discretely observed curves that we used in subsequent analyses. The corresponding daily Eurodollar futures curves from the year 1994 are illustrated in the left hand panel of Figure \ref{fig:curves}.

\begin{figure}
        \centering
 \mbox{\subfigure{\includegraphics[width=3.3in]{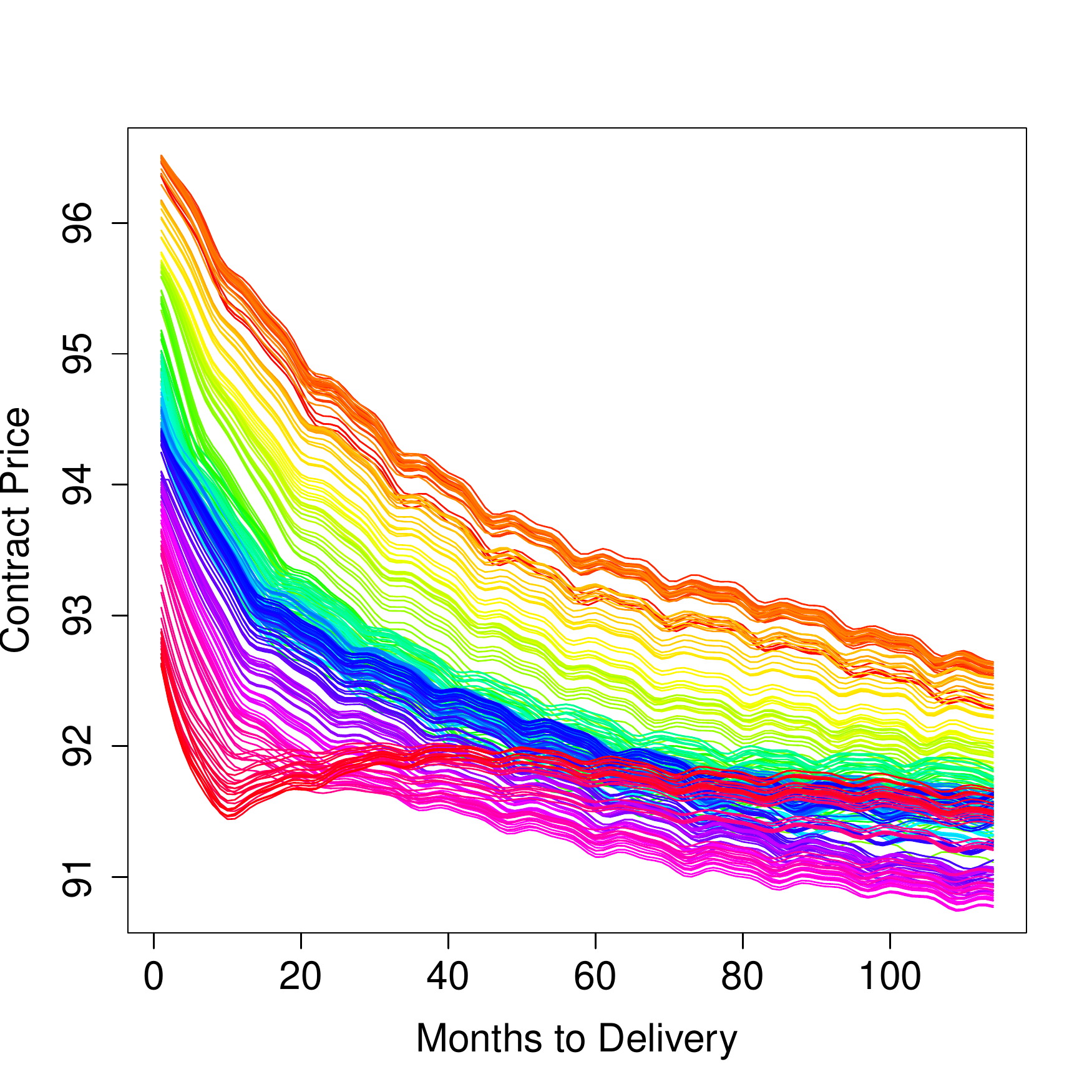}} \subfigure{\includegraphics[width=3.3in]{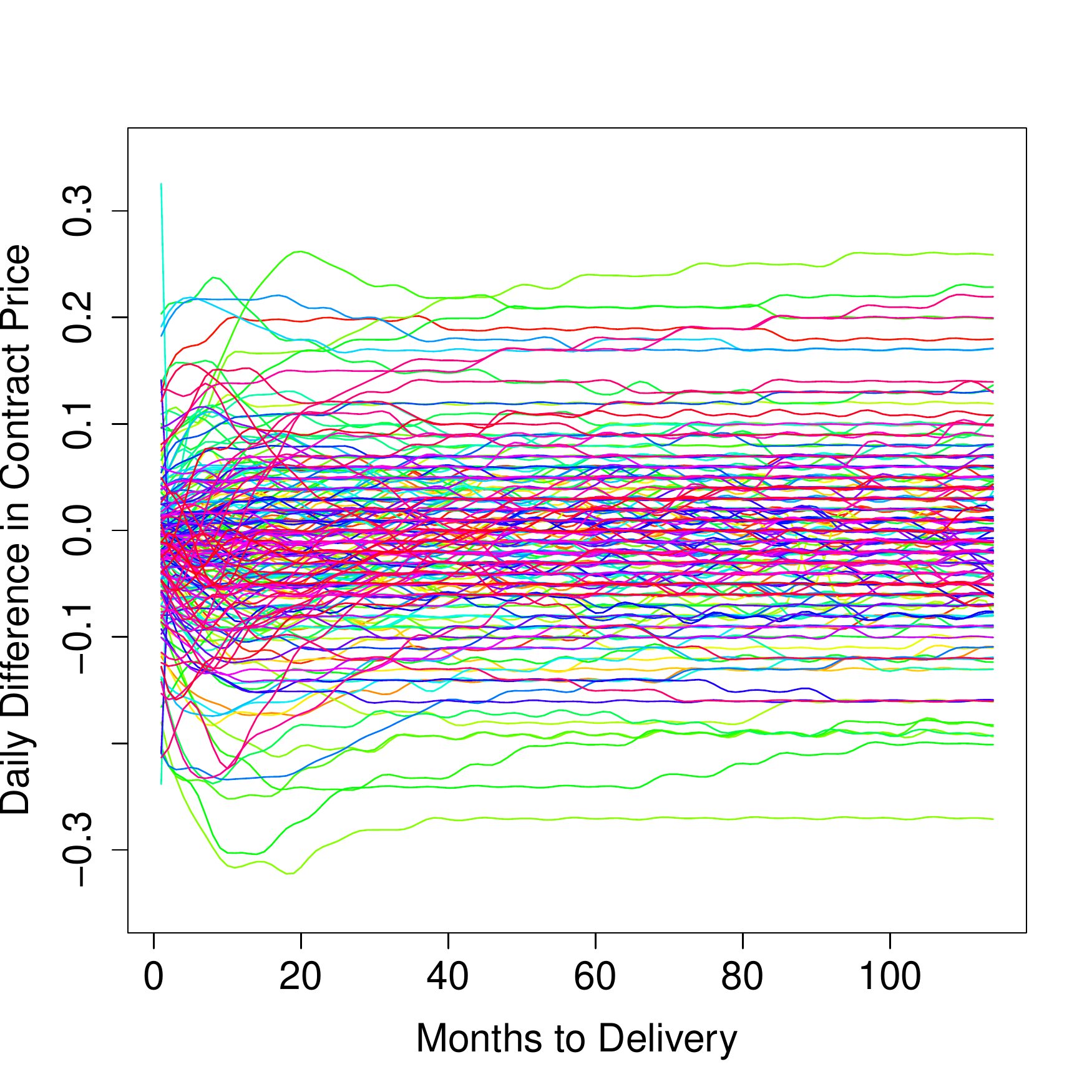} }   } \\
 \vspace{-.5cm}\caption{ Left hand panel: A rainbow plot of the daily Eurodollar futures curves from 1994 obtained from cubic spline smoothing. Right hand panel: A rainbow plot of the first differenced Eurodollar futures curves. Rainbow plots show earlier curves in red and progresses through the visible spectrum showing later curves in violet.   }\label{fig:curves}
\end{figure}

We considered data spanning 10 years from 1994 to 2003, consisting of approximately 2,500 curves. We treated these data as 10 yearly samples of functional time series each of length approximately 250. The basic question we wish to address in each sample is whether or not the curves in that year seem to exhibit significant serial dependence as measured by their autocovariance operators. We applied the proposed test based on the power-transformed statistic $T_n^{\beta_0}$ using the Bartlett kernel and corresponding empirical bandwidth of the form $\hat{M}n^{1/3}$ to each sample. The approximate $p$-values of these tests are displayed in the right hand panel of Figure \ref{fig:pvals}, which are essentially equal to zero in all cases. This suggests that the Eurodollar futures curves exhibit substantial serial dependence. This observation is consistent with the suggested FAR(1) model for these curves proposed by \cite{kargin:onatski:2008}.

Although these results are consistent with the data following an FAR(1) model, they may also be explained by the fact that the raw Eurodollar futures time series are apparently not mean stationary; over periods as long as a year they typically exhibit strong trends and seasonality. We evaluated the stationarity of each of these samples using the test proposed in \cite{horvath:kokoszka:rice:2014}, which suggest that in general the raw Eurodollar futures curves are non-stationary. Letting $X_i(t)$ denote the futures curve on day $i$, we studied then instead the first order differenced curves $Y_i(t)=X_i(t)-X_{i-1}(t)$. The first order differenced Eurodollar futures curves from 1994 are shown in the right hand panel of Figure \ref{fig:curves}, and the stationarity test of \cite{horvath:kokoszka:rice:2014} applied to these curves suggest that they are reasonably stationary. The results of these stationarity tests are illustrated in the left hand panel of Figure \ref{fig:pvals}.

We applied the proposed test using the statistic $T_n^{\beta_0}$ with the same settings as above to each sample of first differenced curves. In six of the ten years considered the hypothesis that the first differenced futures curves evolve as a functional white noise cannot be rejected at the $0.05$ level. Interestingly however, in consecutive years from 1998 to 2001 the first differenced Eurodollar futures curves exhibit significant autocovariance operators to the $0.05$ level as measured by our tests.

\begin{figure}
        \centering
 \mbox{\subfigure{\includegraphics[width=3.1in]{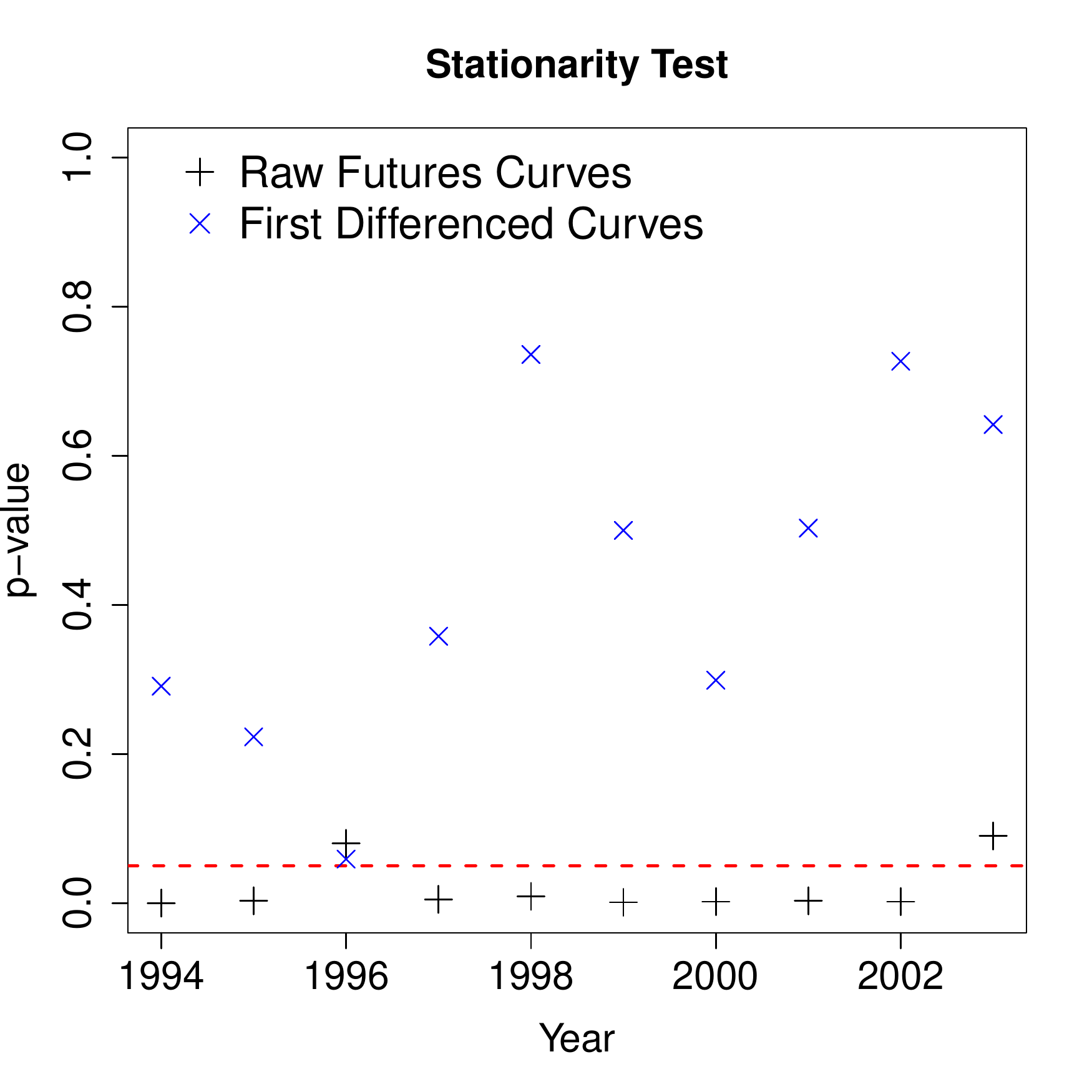}} \subfigure{\includegraphics[width=3.1in]{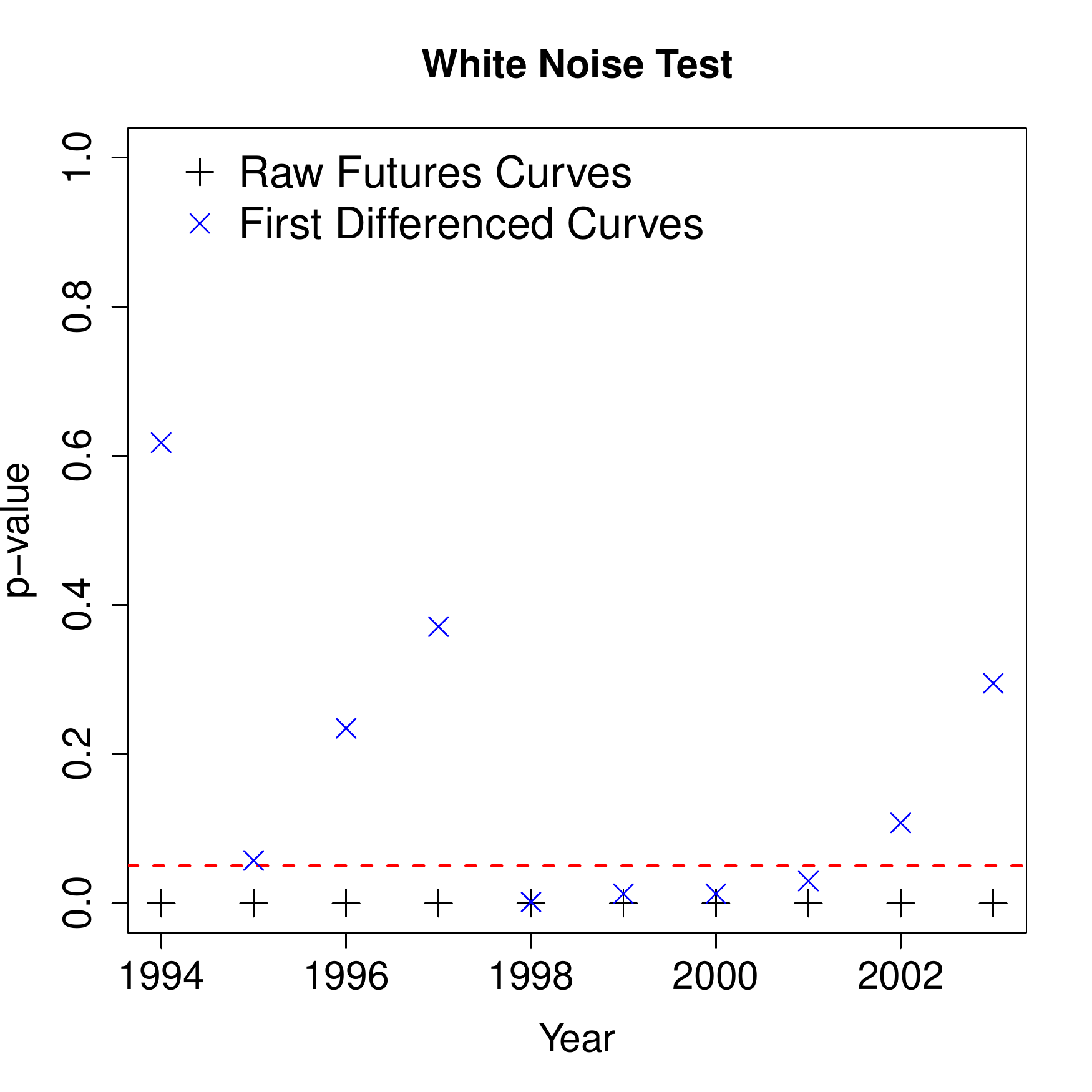} }   } \\
 \vspace{-.5cm}\caption{ Left hand panel: p-values of the stationarity test proposed in \cite{horvath:kokoszka:rice:2014} against the corresponding year of the sample. Right hand panel: p-values of the proposed white noise test against the corresponding years of the sample. In each figure $+$ corresponds to the raw Eurodollar futures curves, and \textcolor[rgb]{0.00,0.07,1.00}{$\times$} corresponds to the first differenced curves. }\label{fig:pvals}
\end{figure}

\section{Conclusion}\label{sec:conc}

We have introduced a new test statistic for white noise testing with functional time series based on kernel lag-window estimates of the spectral density operator. The asymptotic properties of the proposed test have been established assuming the observed time series is a strong white noise, and it was also shown to be consistent for general time series exhibiting serial correlation. The test seems to improve upon existing tests in terms of power against functional autoregressive alternatives, although it has the drawback that it is not well sized for general weak white noise sequences in function space, such as for functional GARCH processes.

Based on the work of \cite{shao:2011} and \cite{shao:2011:2}, we conjecture that \autoref{thm:main} can be established for general weakly dependent white noise sequences in $\mathbb{H}$, despite the fact the proposed tests have markedly inflated size for sequences exhibiting conditional heteroscedasticity in our simulation. Roughly speaking, \cite{shao:2011} shows that in the scalar case the limit distribution of $T_n$ is determined asymptotically by the normalized autocovariances of the process at long lags, which behave essentially the same with strong white noise sequences as with weakly dependent white noises. With this in mind and in light of our simulation results, one expects to need very long time series and a large bandwidth in order for this asymptotic result to be predictive of the behavior of the test statistic for general weak white noise sequence exhibiting serial dependence, which encourages the use of a block bootstrap in finite samples. We leave these issues as potential directions for future research.
\section*{Acknowledgements}
The first author would like to acknowledge the support of the Communaut\'e Fran\c caise de Belgique, Actions de Recherche Concert\' ees, Projects Consolidation 2016--2021. The second author is partially supported by the Natural Science and Engineering Research Council of Canada's Discovery and Accelerator grants. We would like to thank Professor Alessandra Luati for directing us to the work of \citet{chen2004} and we would also like to thank the anonymous reviewers as well as the associate editor for their comments and suggestions that helped us improve this work substantially.


\bibliographystyle{apalike}

\bibliography{garch}

\appendix

\section*{Appendix}\label{sec:app}
\section{Proof of \autoref{thm:main}}

{\small
We begin with a note comparing the basic approach here to that of \cite{horvath:huskova:rice:2013}. \cite{horvath:huskova:rice:2013} uses at its core a central limit theorem for vectors with increasing dimension adapted from \cite{senatov:1998}, and as a result the fairly restrictive condition on the bandwidth $p_n = O(\log(n))$ is needed. Our proof is somewhat more straightforward in the sense that we show that the suitably normalized statistic $\hat{Q}_n^2$ is a martingale with uniformly asymptotically negligible increments. This requires a number of intermediary approximations. Before we prove \autoref{thm:main}, we state two elementary lemmas that we use in the proof.
\begin{lemma}\label{lemma:expofinnerprod}
Suppose that $X$ and $Y$ are independent random elements with values in a separable Hilbert space $\mathbb H$ such that $\operatorname E\|X\|<\infty$, $\operatorname E\|Y\|<\infty$ and  $\operatorname EX=0$ or $\operatorname EY=0$. Then $\operatorname E\langle X,Y\rangle=0$.
\end{lemma}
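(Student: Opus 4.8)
The plan is to reduce the statement to the elementary fact that a bounded linear functional commutes with the Bochner expectation, once we have secured the integrability of the scalar random variable $\langle X,Y\rangle$.

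First I would check that $\langle X,Y\rangle$ is integrable: by the Cauchy--Schwarz inequality $|\langle X,Y\rangle|\le\|X\|\,\|Y\|$, and since $X$ and $Y$ are independent, $\operatorname E[\|X\|\,\|Y\|]=\operatorname E\|X\|\cdot\operatorname E\|Y\|<\infty$ by hypothesis. Hence $\operatorname E\langle X,Y\rangle$ is well defined and Fubini's theorem applies on the product space. Write $m_X=\operatorname EX$ and $m_Y=\operatorname EY$ for the Bochner means in $\mathbb H$, which exist because $\operatorname E\|X\|,\operatorname E\|Y\|<\infty$. For any fixed $y\in\mathbb H$ the map $x\mapsto\langle x,y\rangle$ is a bounded linear functional on $\mathbb H$, so it commutes with the expectation: $\operatorname E\langle X,y\rangle=\langle m_X,y\rangle$. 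Conditioning on $Y$ and using independence gives $\operatorname E[\langle X,Y\rangle\mid Y]=\langle m_X,Y\rangle$ almost surely; taking expectations and commuting the bounded functional $x'\mapsto\langle m_X,x'\rangle$ with the expectation of $Y$ yields $\operatorname E\langle X,Y\rangle=\langle m_X,m_Y\rangle$. If $\operatorname EX=0$ then $m_X=0$ and this is zero; if instead $\operatorname EY=0$ then $m_Y=0$ and it is again zero.

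As an alternative route that avoids conditional expectations, one may fix an orthonormal basis $\{e_k\}_{k\ge1}$ of $\mathbb H$ and write $\langle X,Y\rangle=\sum_{k\ge1}\langle X,e_k\rangle\langle Y,e_k\rangle$, observe that the partial sums are dominated by $\|X\|\,\|Y\|$ via Cauchy--Schwarz applied coordinatewise, pass the expectation through the sum by dominated convergence, use independence termwise, and invoke $\operatorname E\langle X,e_k\rangle=\langle m_X,e_k\rangle=0$ (or the analogue for $Y$). Either way there is essentially no obstacle; the only point requiring a moment's care is the integrability and Fubini/dominated-convergence justification, which the assumptions $\operatorname E\|X\|<\infty$ and $\operatorname E\|Y\|<\infty$ together with independence supply immediately.
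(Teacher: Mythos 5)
Your argument is correct. Note that the paper states this lemma without proof, treating it as elementary, so there is nothing to compare against; your write-up simply supplies the standard justification. Both of your routes are sound: the integrability step $|\langle X,Y\rangle|\le\|X\|\,\|Y\|$ together with $\operatorname E[\|X\|\,\|Y\|]=\operatorname E\|X\|\operatorname E\|Y\|<\infty$ legitimizes the Fubini/conditioning step, and the identity $\operatorname E\langle X,Y\rangle=\langle\operatorname EX,\operatorname EY\rangle$ (or the termwise basis argument with dominating function $\|X\|\,\|Y\|$) then gives the conclusion under either mean-zero hypothesis.
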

\begin{lemma}\label{lemma:expofinnerprodsq}
Suppose that $X$ and $Y$ are independent and identically distributed random elements with values in a separable Hilbert space $\mathbb H$ with zero means and finite second moments. Then
\[
	\operatorname E|\langle X,Y\rangle|^2
	=\opnorm{\operatorname E[X\otimes X]}_2^2.
\]
\end{lemma}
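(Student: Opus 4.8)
The plan is to expand $\langle X,Y\rangle$ in a fixed orthonormal basis, interchange expectation and summation (the one step requiring care), and then use independence together with the equality in distribution of $X$ and $Y$ to collapse everything into the Hilbert--Schmidt norm of $\mathcal C:=\operatorname E[X\otimes X]$. To begin, note $|\langle X,Y\rangle|\le\|X\|\,\|Y\|$ together with independence gives $\operatorname E|\langle X,Y\rangle|^2\le(\operatorname E\|X\|^2)^2<\infty$, so the left-hand side is finite. Fix an orthonormal basis $\{e_k\}_{k\ge1}$ of $\mathbb H$; since $\mathbb H$ is real, Parseval's identity yields
\[
	|\langle X,Y\rangle|^2
	=\sum_{k\ge1}\sum_{l\ge1}\langle X,e_k\rangle\langle X,e_l\rangle\langle Y,e_k\rangle\langle Y,e_l\rangle .
\]

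The main (and essentially only) point to verify is that one may take expectations term by term in this double series. Writing $c_k=\operatorname E\langle X,e_k\rangle^2$, so that $\sum_k c_k=\operatorname E\|X\|^2<\infty$, independence of $X$ and $Y$, their identical distribution, and the Cauchy--Schwarz inequality bound the sum of absolute values of the expected summands by
\[
	\sum_{k,l}\bigl(\operatorname E[|\langle X,e_k\rangle|\,|\langle X,e_l\rangle|]\bigr)^2
	\le\sum_{k,l}c_kc_l
	=(\operatorname E\|X\|^2)^2<\infty ,
\]
so Fubini's theorem applies. Taking expectations term by term and using independence and the common law of $X$ and $Y$ once more,
\[
	\operatorname E|\langle X,Y\rangle|^2
	=\sum_{k,l}\operatorname E[\langle X,e_k\rangle\langle X,e_l\rangle]\,\operatorname E[\langle Y,e_k\rangle\langle Y,e_l\rangle]
	=\sum_{k,l}\bigl(\operatorname E[\langle X,e_k\rangle\langle X,e_l\rangle]\bigr)^2 .
\]

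It remains to recognise the right-hand side as $\opnorm{\mathcal C}_2^2$. The hypotheses (finite second moment, zero mean) ensure $\mathcal C=\operatorname E[X\otimes X]$ is well defined, e.g.\ as a Bochner integral in $S_2(\mathbb H)$ since $\opnorm{X\otimes X}_2=\|X\|^2$. Because $\langle(X\otimes X)e_l,e_k\rangle=\langle X,e_k\rangle\langle X,e_l\rangle$ and $A\mapsto\langle Ae_l,e_k\rangle$ is continuous on $S_2(\mathbb H)$, we get $\operatorname E[\langle X,e_k\rangle\langle X,e_l\rangle]=\langle\mathcal Ce_l,e_k\rangle$, whence
\[
	\operatorname E|\langle X,Y\rangle|^2
	=\sum_{k,l}\langle\mathcal Ce_l,e_k\rangle^2
	=\sum_l\|\mathcal Ce_l\|^2
	=\opnorm{\mathcal C}_2^2
	=\opnorm{\operatorname E[X\otimes X]}_2^2 ,
\]
as claimed. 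An equivalent route avoids the double sum by conditioning on $Y$: by independence $\operatorname E[|\langle X,Y\rangle|^2\mid Y]=\langle\mathcal CY,Y\rangle$, and since $\mathcal C$ is self-adjoint $\langle\mathcal Cy,y\rangle=\langle\mathcal C,y\otimes y\rangle_{\mathrm{HS}}$ for every $y$, so pulling the expectation through the continuous functional $\langle\mathcal C,\cdot\rangle_{\mathrm{HS}}$ on $S_2(\mathbb H)$ gives $\operatorname E|\langle X,Y\rangle|^2=\langle\mathcal C,\operatorname E[Y\otimes Y]\rangle_{\mathrm{HS}}=\langle\mathcal C,\mathcal C\rangle_{\mathrm{HS}}=\opnorm{\mathcal C}_2^2$. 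In either approach the only genuine subtlety is the interchange of expectation with the infinite sum (equivalently, the Bochner-integrability step), which is precisely where the finite-second-moment assumption is used; the rest is bookkeeping with orthonormal bases.
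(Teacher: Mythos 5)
Your proof is correct. The paper treats this lemma as elementary and gives no proof of its own, but the argument you give—expanding $\langle X,Y\rangle$ in an orthonormal basis, justifying the interchange of expectation and the double series by the Cauchy--Schwarz/Fubini bound, and identifying $\operatorname E[\langle X,e_k\rangle\langle X,e_l\rangle]=\langle\mathcal Ce_l,e_k\rangle$ so that the sum collapses to $\opnorm{\mathcal C}_2^2$—is exactly the computation the paper performs elsewhere (see the proof of \autoref{lemma:HSsq}, where $\operatorname E|\langle u_t,u_s\rangle|^2$ is reduced to $\opnorm{\mathcal C(0)}_2^2$ via Parseval's identity), so your write-up simply supplies the omitted details; the conditioning-on-$Y$ variant you sketch is an equally valid shortcut. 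One minor observation: the zero-mean hypothesis is not actually used in your argument (nor is it needed for the identity itself); it only matters for interpreting $\operatorname E[X\otimes X]$ as the covariance operator in the paper's application.
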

Now we are ready to prove \autoref{thm:main}. The basic format of the proof follows \cite{hong:1996}, but in some instances must significantly deviate due to the assumption that the underlying variables are in an arbitrary separable Hilbert space. We denote $k_{nj}=k(j/p_n)$ and $Z_{jt}=u_t\otimes u_{t-j}$.
\begin{proof}[Proof of  \autoref{thm:main}]
By \eqref{eq:prefactor} and Slutsky's theorem, it suffices to show that $\hat\sigma_n^2/\opnorm{\hat{\mathcal C}_n(0)}_2\xrightarrow{p}\sigma^2/\opnorm{\mathcal C(0)}_2$ as $n\to\infty$ and
\begin{equation}\label{eq:nonunitvariance}
	\frac{2^{-1}n\hat\sigma_n^{-4}\hat Q_n^2-C_n(k)}{\sqrt{2D_n(k)}}\xrightarrow{d}N(0,\opnorm{\mathcal C(0)}_2^4/\sigma^8)\quad\text{as}\quad n\to\infty.
\end{equation}
By the law of large numbers, we have that $\hat\sigma_n^2\xrightarrow{p}\sigma^2$ as $n\to\infty$. By \autoref{lemma:cov_con} below, $\opnorm{\hat{\mathcal C}_n(0)-\mathcal C(0)}_2\xrightarrow{p}0$ as $n\to\infty$ and, using the reverse triangle inequality, $\opnorm{\hat{\mathcal C}_n(0)}_2\xrightarrow{p}\opnorm{\mathcal C(0)}_2$ as $n\to\infty$. Hence, $\hat\sigma_n^2/\opnorm{\hat{\mathcal C}_n(0)}_2\xrightarrow{p}\sigma^2/\opnorm{\mathcal C(0)}_2$ as $n\to\infty$.

Now we show \eqref{eq:nonunitvariance} holds. By Chebyshev's inequality, $\hat\sigma_n^2-\sigma^2=O_p(n^{-1/2})$ as $n\to\infty$. Since
\begin{equation}\label{eq:exphatQ_n^2}
	\operatorname E\hat Q_n^2
	=2\sum_{j=1}^{n-1}k_{jn}^2\operatorname E\opnorm{\hat{\mathcal C}_n(j)}_2^2
	=2\sigma^4n^{-1}\sum_{j=1}^{n-1}(1-j/n)k_{jn}^2
	=2\sigma^4n^{-1}C_n(k),
\end{equation}
Markov's inequality implies that $\hat Q_n^2=O_p(p_n/n)$ as $n\to\infty$. We obtain
\[
	\hat\sigma_n^{-4}\hat Q_n^2
	=\sigma^{-4}\hat Q_n^2+(\hat\sigma_n^{-4}-\sigma^{-4})\hat Q_n^2
	=\sigma^{-4}\hat Q_n^2+o_p(p_n^{1/2}/n)
	\quad\text{as}\quad n\to\infty
\]
because $p_n/n\to0$ as $n\to\infty$. Hence,
\[
	\frac{2^{-1}n\hat\sigma_n^{-4}\hat Q_n^2-C_n(k)}{\sqrt{2 D_n(k)}}
	=\frac{2^{-1}n\sigma^{-4}\hat Q_n^2-C_n(k)}{\sqrt{2 D_n(k)}}+o_p(1)
	\quad\text{as}\quad n\to\infty.
\]
Since $\hat{\mathcal C}_n(j)=n^{-1}\sum_{t=j+1}^nZ_{jt}$,
\[
	\opnorm{\hat{\mathcal C}_n(j)}_2^2
	=n^{-2}\sum_{t,s=j+1}^n\langle Z_{jt},Z_{js}\rangle_{\mathrm{HS}}
	=n^{-2}\Big[\sum_{t=j+1}^n\opnorm{Z_{jt}}_2^2+\sum_{t=j+2}^n\sum_{s=j+1}^{t-1}2\langle Z_{jt},Z_{js}\rangle_{\mathrm{HS}}\Bigr].
\]
We obtain
\[
	2^{-1}n\sigma^{-4}\hat Q_n^2
	=n\sigma^{-4}\sum_{j=1}^{n-1}k_{jn}^2\opnorm{\hat{\mathcal C}_n(j)}_2^2
	=\sigma^{-4}(\tilde C_n+\tilde W_n),
\]
where
\begin{equation}\label{eq:doubleprod}
	\tilde C_n
	=n^{-1}\sum_{j=1}^{n-1}\sum_{t=j+1}^nk_{nj}^2\opnorm{Z_{jt}}_2^2
	\quad\text{and}\quad
	\tilde W_n
	=n^{-1}\sum_{j=1}^{n-2}\sum_{t=j+2}^n\sum_{s=j+1}^{t-1}2k_{nj}^2\langle Z_{jt},Z_{js}\rangle_{\mathrm{HS}}.
\end{equation}

Let us observe that
\[
	\operatorname E\tilde C_n
	=\sigma^4\sum_{j=1}^{n-1}(1-j/n)k_{nj}^2
	=\sigma^4C_n(k),
\]
\[
	\operatorname E|\sigma^{-4}\tilde C_n-C_n(k)|^2
	=\operatorname E|\sigma^{-4}\tilde C_n-\sigma^{-4}\operatorname E\tilde C_n|^2
	=\sigma^{-8}\operatorname{Var}\tilde C_n
\]
and
\[
	\operatorname{Var}\tilde C_n
	=\operatorname E|\tilde C_n-\operatorname E\tilde C_n|^2
	=n^{-2}\operatorname E\Bigl|\sum_{j=1}^{n-1}k_{nj}^2\sum_{t=j+1}^n(\opnorm{Z_{jt}}_2^2-\sigma^4)\Bigr|^2.
\]
Since $\operatorname E|\sum_{t=j+1}^n(\opnorm{Z_{jt}}_2^2-\sigma^4)|^2=O(n)$ as $n\to\infty$ for each $0<j<n$, by Minkowski's inequality,
\begin{equation}\label{eq:cor-eq}
	\operatorname{Var}\tilde C_n
	\le\biggl|n^{-1}\sum_{j=1}^{n-1}k_{nj}^2\Bigl(\operatorname E\Bigl|\sum_{t=j+1}^n(\opnorm{Z_{jt}}_2^2-\sigma^4)\Bigr|^2\Bigr)^{1/2}\biggr|^2
	=O(p_n^2/n)
	\quad\text{as}\quad n\to\infty.
\end{equation}
Since $p_n/n\to0$ as $n\to\infty$, we have that $p_n^{-1/2}(\sigma^{-4}\tilde C_n-C_n(k))=o_p(1)$ as $n\to\infty$ and
\[
	\frac{2^{-1}n\sigma^{-4}\hat Q_n^2-C_n(k)}{\sqrt{2 D_n(k)}}
	=\frac{\sigma^{-4}(\tilde C_n+\tilde W_n)-C_n(k)}{\sqrt{2 D_n(k)}}
	=\frac{\tilde W_n}{\sqrt{2\sigma^8D_n(k)}}
	+o_p(1)
	\quad\text{as}\quad n\to\infty.
\]

So we must show that $\tilde W_n/\sqrt{2\sigma^8D_n(k)}\xrightarrow{d}N(0,\opnorm{\mathcal C(0)}_2^4/\sigma^8)$ as $n\to\infty$. Let us choose a real sequence $\{l_n\}_{n\ge1}$ such that $p_n/l_n + l_n/n\to0$ as $n\to\infty$. With $w_{jts}=2\langle Z_{jt},Z_{js}\rangle_{\mathrm{HS}}$, we have that
\[
	\tilde W_n
	=\Bigl[n^{-1}\sum_{j=1}^{l_n}k_{nj}^2+n^{-1}\sum_{j=l_n+1}^{n-2}k_{nj}^2\Bigr]\sum_{t=j+2}^n\sum_{s=j+1}^{t-1}w_{jts}
	=W_{1n}+V_{1n}.
\]
Next, we partition
\begin{align*}
	W_{1n}
	&=n^{-1}\sum_{j=1}^{l_n}k_{nj}^2\Big[\sum_{t=2l_n+3}^n\sum_{s=l_n+2}^{t-l_n-1}+\sum_{t=2l_n+3}^n\sum_{s=t-l_n}^{t-1}+\sum_{t=l_n+3}^{2l_n +2}\sum_{s=l_n+2}^{t-1}+\sum_{s=j+1}^{l_n+1}\sum_{t=s+1}^n\Bigr]w_{jts}\\
	&=U_n+V_{2n}+V_{3n}+V_{4n}.
\end{align*}
Hence, $\tilde W_n=U_n+\sum_{j=1}^4V_{nj}$. One may show that $p_n^{-1/2}V_{jn}=o_p(1)$ as $n\to\infty$ for $j=1,\ldots,4$. We provide a complete argument to show $p_n^{-1/2}V_{1n}=o_p(1)$, and omit the details for $V_{jn}$, $j=2,3$ and 4 since they are similar. We have that
\begin{align*}
	|V_{1n}|^2
	&=n^{-2}\sum_{j=l_n+1}^{n-2}k_{nj}^4\biggl|\sum_{t=j+2}^n\sum_{s=j+1}^{t-1}w_{jts}\biggr|^2\\
	&\quad+2n^{-2}\sum_{j_1=l_n+2}^{n-2}\sum_{j_2=l_n+1}^{n+2}\biggl(k_{nj_1}^2\sum_{t_1=j_1+2}^n\sum_{s_1=j_1+1}^{t_1-1}w_{j_1t_1s_1}\biggr)\biggl(k_{nj_2}^2\sum_{t_2=j_2+2}^n\sum_{s_2=j_2+1}^{t_2-1}w_{j_2t_2s_2}\biggr)\\
	&=A_{1n}+B_{1n}.
\end{align*}
Let us consider the expected value $\operatorname E[w_{j_1t_1s_1}w_{j_2t_2s_2}]$. If $j_1=j_2=j$, the expected value is equal to $0$ by \autoref{lemma:expectation0} unless $s_1=s_2$ and $t_1=t_2$. Hence, using the fact that $\operatorname E|w_{jts}|^2\le 4\mu_4^2$,
\[
	\operatorname E\biggl|\sum_{t=j+2}^n\sum_{s=j+1}^{t-1}w_{jts}\biggr|^2
	=\sum_{t=j+2}^n\sum_{s=j+1}^{t-1}\operatorname E|w_{jts}|^2
	=O(n^2)
\]
and $\operatorname EA_{1n}=o(p_n)$ since $l_n\to\infty$ as $n\to\infty$. Also,
\begin{align*}
	\operatorname EB_{1n}
	&=2n^{-2}\sum_{j_1=l_n+2}^{n-2}k_{nj_1}^2\sum_{j_2=l_n+1}^{j_1-1}k_{nj_2}^2\bigg(\sum_{t_1=j_1+2}^n\sum_{s_1=j_1+1}^{t_1-1}\biggr)\bigg(\sum_{t_2=j_2+2}^n\sum_{s_2=j_2+1}^{t_2-1}\operatorname E[w_{j_1t_1s_1}w_{j_2t_2s_2}]\biggr)\\
	&=O(p_n^2/n)
\end{align*}
as $n\to\infty$ since $\operatorname E[w_{j_1t_1s_1}w_{j_2t_1s_2}]=0$ by \autoref{lemma:expectation0} unless $t_1=t_2$, $s_1=t_1-j_2$ and $s_2=t_1-j_1$ using the fact that $j_1>j_2$. Hence, $\operatorname E|V_{1n}|^2=o(p_n)$ as $n\to\infty$.

Now for $U_n$, we have further that
\begin{align*}
	U_n
	&=n^{-1}\sum_{j=1}^{l_n}k_{nj}^2\sum_{t=2l_n+3}^n\sum_{s=l_n+2}^{t-l_n-1}2\langle Z_{jt},Z_{js}\rangle_{\mathrm{HS}}\\
	&=n^{-1}\sum_{t=2l_n+3}^n2\sum_{j=1}^{l_n}k_{nj}^2\Bigl\langle Z_{jt},\sum_{s=l_n+2}^{t-l_n-1}Z_{js}\Bigr\rangle_{\mathrm{HS}}\\
	&=n^{-1}\sum_{t=2l_n+3}^nU_{nt},
\end{align*}
where
\[
	U_{nt}
	=2\sum_{j=1}^{l_n}k_{nj}^2\langle Z_{jt},H_{jt-l_n-1}\rangle_{\mathrm{HS}}
	\quad\text{and}\quad
	H_{jt-l_n-1}
	=\sum_{s=l_n+2}^{t-l_n-1}Z_{js}.
\]
To establish the asymptotic distribution of $T_n$, we show that $U_n/\sqrt{2\sigma^8D_n(k)}\xrightarrow{d}N(0,\opnorm{\mathcal C(0)}_2^4/\sigma^8)$ as $n\to\infty$. We have that
\begin{align*}
	\operatorname E[U_{nt}\mid F_{t-1}]
	&=2\sum_{j=1}^{l_n}k_{nj}^2\sum_{s=l_n+2}^{t-l_n-1}\operatorname E[\langle u_t,u_s\rangle\langle u_{t-j},u_{s-j}\rangle\mid F_{t-1}]\\
	&=2\sum_{j=1}^{l_n}k_{nj}^2\sum_{s=l_n+2}^{t-l_n-1}\langle u_{t-j},u_{s-j}\rangle\operatorname E[\langle u_t,u_s\rangle\mid F_{t-1}]
\end{align*}
using the fact that $\langle u_{t-j},u_{s-j}\rangle$ is $F_{t-1}$-measurable. Suppose that $\{e_n\}_{n\ge1}$ is an orthonormal basis of $\mathbb H$. Then
\begin{equation}\label{eq:interchange}
	\operatorname E[\langle u_t,u_s\rangle\mid F_{t-1}]
	=\sum_{i=1}^\infty\operatorname E[\langle u_t,e_i\rangle\langle e_i,u_s\rangle\mid F_{t-1}]
	=\sum_{i=1}^\infty\langle e_i,u_s\rangle\operatorname E\langle u_t,e_i\rangle
	=0
\end{equation}
using the fact that $\langle e_i,u_s\rangle$ is $F_{t-1}$-measurable. Using the Cauchy-Schwarz inequality, $H_{0,\text{iid}}$, and Parseval's identity,
\[
	\sum_{i=1}^\infty\operatorname E|\langle u_t,e_i\rangle\langle e_i,u_s\rangle|
	\le\sum_{i=1}^\infty(\operatorname E|\langle u_t,e_i\rangle|^2)^{1/2}(\operatorname E|\langle e_i,u_s\rangle|^2)^{1/2}
	=\operatorname E\|u_0\|^2<\infty.
\]
This shows that the interchange of the expected value and the series in \eqref{eq:interchange} is justified. Hence, $\{U_{nt},F_{t-1}\}$ is a martingale difference sequence, where $F_t$ is the $\sigma$-field generated by $u_s$, $s\le t$.

Let us denote $\sigma^2(n)=\operatorname EU_n^2$. Theorem~2 of \cite{brown:1971} states that $\sigma^{-1}(n)U_n\xrightarrow{d}N(0,1)$ if both of the following conditions hold:
\begin{enumerate}[(a)]
\item\label{a}$\sigma^{-2}(n)n^{-2}\sum_{t=2l_n+3}^n\operatorname E[U_{nt}^2I_{\{|U_{nt}|>\varepsilon n\sigma(n)\}}]\to0$ as $n\to\infty$ for each $\varepsilon>0$;
\item\label{b}$\sigma^{-2}(n)n^{-2}\sum_{t=2l_n+3}^n\ddot U_{nt}^2\xrightarrow{p}1$ as $n\to\infty$, where $\ddot U_{nt}^2=\operatorname E[U_{nt}^2\mid F_{t-1}]$.
\end{enumerate}

We show that $\sigma^2(n)/(2\sigma^8p_nD(k))\to\opnorm{\mathcal C(0)}_2^4/\sigma^8$ as $n\to\infty$. We have that
\[
	\sigma^2(n)
	=n^{-2}\operatorname E\Bigl|\sum_{t=2l_n+3}^nU_{nt}\Bigr|^2
	=n^{-2}\Bigl[\sum_{t=2l_n+3}^n\operatorname EU_{nt}^2+2\sum_{s=2l_n+4}^n\sum_{t=2l_n+3}^{s-1}\operatorname E[U_{ns}U_{nt}]\Bigr].
\]
Let us observe that $Z_{it}=u_t\otimes u_{t-i}$  is independent of $H_{jt-l_n-1}=\sum_{s=l_n+2}^{t-l_n-1}u_s\otimes u_{s-j}$ for $2l_n+3\le t\le n$ and $1\le i,j\le l_n$. We obtain
\begin{multline*}
	\operatorname EU_{nt}^2
	=4\sum_{j=1}^{l_n}k_{nj}^4\operatorname E|\langle Z_{jt},H_{jt-l_n-1}\rangle_{\mathrm{HS}}|^2\\
	+8\sum_{j=2}^{l_n}\sum_{i=1}^{j-1}k_{nj}^2k_{ni}^2\operatorname E[\langle Z_{jt},H_{jt-l_n-1}\rangle_{\mathrm{HS}}\langle Z_{it},H_{it-l_n-1}\rangle_{\mathrm{HS}}].
\end{multline*}
We have that
\[
	\operatorname E|\langle Z_{jt},H_{jt-l_n-1}\rangle_{\mathrm{HS}}|^2
	=\sum_{s=l_n+2}^{t-l_n-1}\operatorname E|\langle Z_{jt},Z_{js}\rangle_{\mathrm{HS}}|^2
	+2\sum_{s=l_n+3}^{t-l_n-1}\sum_{r=l_n+2}^{s-1}\operatorname E[\langle Z_{jt},Z_{js}\rangle_{\mathrm{HS}}\langle Z_{jt},Z_{jr}\rangle_{\mathrm{HS}}].
\]
Let us notice that $u_t$, $u_{t-j}$, $u_s$ and $u_{s-j}$ are independent random elements for $2l_n+3\le t\le n$, $l_n+2\le s\le t-l_n-1$ and $1\le j\le l_n$. Hence, $\langle u_t,u_s\rangle$ and $\langle u_{s-j},u_{t-j}\rangle$ are independent as well and
\begin{align*}
	\operatorname E|\langle Z_{jt},Z_{js}\rangle_{\mathrm{HS}}|^2
	=\operatorname E|\langle u_t,u_s\rangle\langle u_{s-j},u_{t-j}\rangle|^2
	=\operatorname E|\langle u_t,u_s\rangle|^2\operatorname E|\langle u_{s-j},u_{t-j}\rangle|^2
	=\opnorm{\mathcal C(0)}_2^4
\end{align*}
by \autoref{lemma:expofinnerprodsq}. Since $r-j<s-j$,
\[
	\operatorname E[\langle Z_{jt},Z_{js}\rangle_{\mathrm{HS}}\langle Z_{jt},Z_{jr}\rangle_{\mathrm{HS}}]
	=\operatorname E\langle u_{r-j},\langle u_s,u_t\rangle\langle u_{t-j},u_{s-j}\rangle\langle u_r,u_t\rangle u_{t-j}\rangle
	=0
\]
using \autoref{lemma:expofinnerprod}. Since $i<j$,
\begin{multline*}
	\operatorname E[\langle Z_{jt},H_{jt-l_n-1}\rangle_{\mathrm{HS}}\langle Z_{it},H_{it-l_n-1}\rangle_{\mathrm{HS}}]=\\
	=\sum_{s=l_n+2}^{t-l_n-1}\sum_{r=l_n+2}^{t-l_n-1}\operatorname E\langle\langle u_t,u_s\rangle\langle u_{s-j},u_{t-j}\rangle\langle u_t,u_r\rangle u_{r-i},u_{t-i}\rangle
	=0.
\end{multline*}

Finally,
\begin{align*}
	\operatorname E[U_{ns}U_{nt}]
	&=4\operatorname E\bigg[\Big[\sum_{i=1}^{l_n}k_{ni}^2\langle Z_{is},H_{is-l_n-1}\rangle_{\mathrm{HS}}\Bigr]\Bigl[\sum_{j=1}^{l_n}k_{nj}^2\langle Z_{jt},H_{jt-l_n-1}\rangle_{\mathrm{HS}}\Bigr]\biggr]\\
	&=4\sum_{i=1}^{l_n}\sum_{j=1}^{l_n}k_{ni}^2k_{nj}^2\operatorname E[\langle Z_{is},H_{is-l_n-1}\rangle_{\mathrm{HS}}\langle Z_{jt},H_{jt-l_n-1}\rangle_{\mathrm{HS}}]
\end{align*}
and
\begin{align*}
	\operatorname E[\langle Z_{is},H_{is-l_n-1}\rangle_{\mathrm{HS}}\langle Z_{jt},H_{jt-l_n-1}\rangle_{\mathrm{HS}}]
	&=\sum_{r=l_n+2}^{s-l_n-1}\sum_{v=l_n+2}^{t-l_n-1}\operatorname E[\langle Z_{is},Z_{ir}\rangle_{\mathrm{HS}}\langle Z_{jt},Z_{jv}\rangle_{\mathrm{HS}}]\\
	&=\sum_{r=l_n+2}^{s-l_n-1}\sum_{v=l_n+2}^{t-l_n-1}\operatorname E\langle u_s,\langle u_{r-i},u_{s-i}\rangle\langle u_t,u_v\rangle\langle u_{v-j},u_{t-j}\rangle u_r\rangle\\
	&=0
\end{align*}
since $t<s$.
Hence,
\begin{equation}\label{eq:sigma^2(n)}
	\sigma^2(n)
	=2\opnorm{\mathcal C(0)}_2^4n^{-2}(n-2l_n-2)(n-2l_n-1)\sum_{j=1}^{l_n}k_{nj}^4
\end{equation}
and $\sigma^2(n)/(2\sigma^8p_nD(k))\to\opnorm{\mathcal C(0)}_2^4/\sigma^8$ as $n\to\infty$.

We show that \eqref{a} holds. We have that
\[
	\operatorname E|U_{nt}|^4
	\le48\Bigl|\sum_{j=1}^{l_n}k_{nj}^4(\operatorname E|\langle Z_{jt},H_{jt-l_n-1}\rangle_{\mathrm{HS}}|^4)^{1/2}\Bigr|^2
\]
since
\begin{multline*}
	\operatorname E\Bigl|\sum_{j=1}^{l_n}k_{nj}^2\langle Z_{jt},H_{jt-l_n-1}\rangle_{\mathrm{HS}}\Bigr|^4
	=\sum_{j=1}^{l_n}k_{nj}^8\operatorname E|\langle Z_{jt},H_{jt-l_n-1}\rangle_{\mathrm{HS}}|^4\\
	+6\sum_{j=2}^{l_n}\sum_{i=1}^{j-1}k_{nj}^4k_{ni}^4\operatorname E|\langle Z_{jt},H_{jt-l_n-1}\rangle_{\mathrm{HS}}\langle Z_{it},H_{it-l_n-1}\rangle_{\mathrm{HS}}|^2
\end{multline*}
and
\begin{multline*}
	\operatorname E|\langle Z_{jt},H_{jt-l_n-1}\rangle_{\mathrm{HS}}\langle Z_{it},H_{it-l_n-1}\rangle_{\mathrm{HS}}|^2\\
	\le(\operatorname E|\langle Z_{jt},H_{jt-l_n-1}\rangle_{\mathrm{HS}}|^4)^{1/2}(\operatorname E|\langle Z_{it},H_{it-l_n-1}\rangle_{\mathrm{HS}}|^4)^{1/2}
\end{multline*}
by the Cauchy-Schwarz inequality. Again by the Cauchy-Schwarz inequality and independence of $u_t$'s,
\[
	\operatorname E|\langle Z_{jt},H_{jt-l_n-1}\rangle_{\mathrm{HS}}|^4
	\le\operatorname E|\opnorm{Z_{jt}}_2\opnorm{H_{jt-l_n-1}}_2|^4
	=(\operatorname E\|u_0\|^4)^2\operatorname E\opnorm{H_{jt-l_n-1}}_2^4.
\]
Also,
\begin{equation}\label{eq:4thmomentofH}
	\operatorname E\opnorm{H_{jt-l_n-1}}_2^4
	=\operatorname E\Big|\sum_{s=l_n+2}^{t-l_n-1}\sum_{r=l_n+2}^{t-l_n-1}\langle Z_{js},Z_{jr}\rangle_{\mathrm{HS}}\Bigr|^2=O(t^2).
\end{equation}
Hence, $\operatorname E|U_{nt}|^4=O(p_n^2t^2)$ and $\sigma^{-4}(n)n^{-4}\sum_{t=2l_n+3}^n\operatorname EU_{nt}^4=O(n^{-1})$.

Now we move to the proof of \eqref{b}. Let us denote $\ddot U_n^2=n^{-2}\sum_{t=2l_n+3}^n\ddot U_{nt}^2$. We show that $\sigma^{-2}(n)[\ddot U_{nt}^2-\sigma^2(n)]\xrightarrow{p}0$ by showing that $\sigma^{-4}(n)\operatorname E|\ddot U_n^2-\sigma^2(n)|^2\to0$ as $n\to\infty$. We have that
\begin{align*}
	\ddot U_{nt}^2
	&=4\sum_{j=1}^{l_n}k_{nj}^4\operatorname E[|\langle Z_{jt},H_{jt-l_n-1}\rangle_{\mathrm{HS}}|^2\mid F_{t-1}]\\
	&\quad+8\sum_{j=2}^{l_n}\sum_{i=1}^{j-1}k_{nj}^2k_{ni}^2\operatorname E[\langle Z_{jt},H_{jt-l_n-1}\rangle_{\mathrm{HS}}\langle Z_{it},H_{it-l_n-1}\rangle_{\mathrm{HS}}\mid F_{t-1}]\\
	&=4B_{nt}+4A_{1nt}.
\end{align*}
Since
\begin{multline*}
	\operatorname E[|\langle Z_{jt},H_{jt-l_n-1}\rangle_{\mathrm{HS}}|^2\mid F_{t-1}]
	=\sum_{s=l_n+2}^{t-l_n-1}\operatorname E[|\langle Z_{jt},Z_{js}\rangle_{\mathrm{HS}}|^2\mid F_{t-1}]\\
	+2\sum_{s=l_n+3}^{t-l_n-1}\sum_{r=l_n+2}^{s-1}\operatorname E[\langle Z_{jt},Z_{js}\rangle_{\mathrm{HS}}\langle Z_{jt},Z_{jr}\rangle_{\mathrm{HS}}\mid F_{t-1}],
\end{multline*}
\begin{align*}
	B_{nt}
	&=\sum_{j=1}^{l_n}k_{nj}^4\bigg[\sum_{s=l_n+2}^{t-l_n-1}\operatorname E[|\langle Z_{jt},Z_{js}\rangle_{\mathrm{HS}}|^2\mid F_{t-1}]\biggr]\\
	&\quad+\sum_{j=1}^{l_n}k_{nj}^4\bigg[2\sum_{s=l_n+3}^{t-l_n-1}\sum_{r=l_n+2}^{s-1}\operatorname E[\langle Z_{jt},Z_{js}\rangle_{\mathrm{HS}}\langle Z_{jt},Z_{jr}\rangle_{\mathrm{HS}}\mid F_{t-1}]\biggr]\\
	&=C_{nt}+A_{2nt}.
\end{align*}
Finally,
\begin{align*}
	C_{nt}
	&=\sum_{j=1}^{l_n}k_{nj}^4\bigg[\sum_{s=l_n+2}^{t-l_n-1}(\operatorname E[|\langle Z_{jt},Z_{js}\rangle_{\mathrm{HS}}|^2\mid F_{t-1}]-\opnorm{\mathcal C(0)}_2^4)\biggr]
	+\opnorm{\mathcal C(0)}_2^4(t-2l_n-2)\sum_{j=1}^{l_n}k_{nj}^4\\
	&=A_{3nt}+\opnorm{\mathcal C(0)}_2^4(t-2l_n-2)\sum_{j=1}^{l_n}k_{nj}^4.
\end{align*}
Hence,
\begin{align*}
	\ddot U_n^2
	&=2\opnorm{\mathcal C(0)}_2^4n^{-2}(n-2l_n-2)(n-2l_n-1)\sum_{j=1}^{l_n}k_{nj}^4+4n^{-2}\sum_{t=2l_n+3}^n\sum_{j=1}^3A_{jnt}\\
	&=\sigma^2(n)+4n^{-2}\sum_{t=2l_n+3}^n\sum_{j=1}^3A_{jnt}
\end{align*}
using \eqref{eq:sigma^2(n)} and $\ddot U_n^2-\sigma^2(n)=4n^{-2}\sum_{t=2l_n+3}^n\sum_{j=1}^3A_{jnt}$.

Now we show that $\sigma^{-4}(n)n^{-4}\operatorname E|\sum_{t=2l_n+3}^n A_{jnt}|^2\to0$ as $n\to\infty$ for $j=1,2,3$. First, we establish that $n^{-4}\operatorname E|\sum_{t=2l_n+3}^nA_{1nt}|^2=o(p_n^2)$ as $n\to\infty$. We have that
\begin{multline}\label{eq:condexp}
	\operatorname E[\langle Z_{jt},H_{jt-l_n-1}\rangle_{\mathrm{HS}}\langle Z_{it},H_{it-l_n-1}\rangle_{\mathrm{HS}}\mid F_{t-1}]\\
	=\sum_{s=l_n+2}^{t-l_n-1}\sum_{v=l_n+2}^{t-l_n-1}\langle u_{s-j},u_{t-j}\rangle\langle u_{v-i},u_{t-i}\rangle\operatorname E[\langle u_t,u_s\rangle\langle u_t,u_v\rangle\mid F_{t-1}].
\end{multline}
The largest index of $u_t$'s in \eqref{eq:condexp} is $t-i$ ($t-i>t-j$ since $j>i$ in the expression of $A_{1nt}$; $t-i$ is greater than $s-j$, $v-i$, $s$ and $v$ since $t-i\ge t- l_n+1$ while $s-j\le t-l_n-3$, $v-i\le t-l_n-2$; $s\le t-l_n-1$ and $v\le t-l_n-1$). The second largest index is $t-j$ and it does not coincide with any other indices ($t-j$ is greater than $s-j$, $v-i$, $s$ and $v$ since $t-j\ge t-l_n$ while $s-j\le t-l_n-3$, $v-i\le t-l_n-2$, $s\le t-l_n-1$ and $v\le t-l_n-1$). Together with \autoref{lemma:expofinnerprod}, this implies that
\begin{multline*}
	\operatorname E[\operatorname E[\langle Z_{j_1t},H_{j_1t-l_n-1}\rangle_{\mathrm{HS}}\langle Z_{i_1t},H_{i_1t-l_n-1}\rangle_{\mathrm{HS}}\mid F_{t-1}]\\
	\times\operatorname E[\langle Z_{j_2t},H_{j_2t-l_n-1}\rangle_{\mathrm{HS}}\langle Z_{i_2t},H_{i_2t-l_n-1}\rangle_{\mathrm{HS}}\mid F_{t-1}]]=0
\end{multline*}
if $i_1\ne i_2$ or $j_1\ne j_2$. Hence,
\[
	\operatorname EA_{1nt}^2
	=4\sum_{j=2}^{l_n}\sum_{i=1}^{j-1}k_{nj}^4k_{ni}^4\operatorname E|\operatorname E[\langle Z_{jt},H_{jt-l_n-1}\rangle_{\mathrm{HS}}\langle Z_{it},H_{it-l_n-1}\rangle_{\mathrm{HS}}\mid F_{t-1}]|^2.
\]
Using the Cauchy-Schwarz inequality twice, the fact that $u_t$'s are independent and that  $F_t$ is the $\sigma$-field consisting of $u_s$, $s\le t$,
\begin{align*}
	&\operatorname E\bigl|\operatorname E[\langle Z_{jt},H_{jt-l_n-1}\rangle_{\mathrm{HS}}\langle Z_{it},H_{it-l_n-1}\rangle_{\mathrm{HS}}\mid F_{t-1}]\bigr|^2\le\\
	&\le\operatorname E\bigl|\operatorname E[\|u_t\|\|u_{t-j}\|\opnorm{H_{jt-l_n-1}}_2\|u_t\|\|u_{t-i}\|\opnorm{H_{it-l_n-1}}_2\mid F_{t-1}]\bigr|^2\\
	&=\sigma^8\operatorname E\bigl|\opnorm{H_{jt-l_n-1}}_2\opnorm{H_{it-l_n-1}}_2\bigr|^2\\
	&\le\sigma^8(\operatorname E\opnorm{H_{jt-l_n-1}}_2^4)^{1/2}(\operatorname E\opnorm{H_{it-l_n-1}}_2^4)^{1/2}.
\end{align*}
Using \eqref{eq:4thmomentofH}, we conclude that $\operatorname EA_{1nt}^2=O(p_n^2t^2)$. Since the largest index of $u_t$'s in $A_{1nt}$ is $t-i$, $\operatorname E[A_{1nt}A_{1ns}]=0$ if $t-l_n>s-1$. Hence, using the Cauchy-Schwarz inequality and the fact that $\sum_{|t-s|\le l_n}ts=O(l_nn^3)$,
\[
	\operatorname E\biggl|\sum_{t=2l_n+3}^nA_{1nt}\biggr|^2
	=\sum_{|t-s|\le l_n}\operatorname E[A_{1nt}A_{1ns}]
	\le \sum_{|t-s|\le l_n}(\operatorname EA_{1nt}^2)^{1/2}(\operatorname EA_{1ns}^2)^{1/2}
	=O(p_n^2l_nn^3)
\]
and, since $l_n/n\to0$, $n^{-4}\operatorname E|\sum_{t=2l_n+3}^nA_{1nt}|^2=o(p_n^2)$ as $n\to\infty$.

We establish that $n^{-4}\operatorname E|\sum_{t=2l_n+3}^nA_{2nt}|^2=O(p_n)$ as $n\to\infty$. Let us observe that
\begin{multline}\label{eq:A_2nt^2}
	\operatorname EA_{2nt}^2
	=\sum_{j=1}^{l_n}k_{nj}^8\operatorname E\bigg|2\sum_{s=l_n+3}^{t-l_n-1}\sum_{r=l_n+2}^{s-1}\operatorname E[\langle Z_{jt},Z_{js}\rangle_{\mathrm{HS}}\langle Z_{jt},Z_{jr}\rangle_{\mathrm{HS}}\mid F_{t-1}]\biggr|^2\\
	+2\sum_{j_1=2}^{l_n}\sum_{j_2=1}^{j_1-1}k_{nj_1}^4k_{nj_2}^4\operatorname E\biggl(\bigg[2\sum_{s_1=l_n+3}^{t-l_n-1}\sum_{r_1=l_n+2}^{s_1-1}\operatorname E[\langle Z_{j_1t},Z_{j_1s_1}\rangle_{\mathrm{HS}}\langle Z_{j_1t},Z_{j_1r_1}\rangle_{\mathrm{HS}}\mid F_{t-1}]\biggr]\\\times\bigg[2\sum_{s_2=l_n+3}^{t-l_n-1}\sum_{r_2=l_n+2}^{s_2-1}\operatorname E[\langle Z_{j_2t},Z_{j_2s_2}\rangle_{\mathrm{HS}}\langle Z_{j_2t},Z_{j_2r_2}\rangle_{\mathrm{HS}}\mid F_{t-1}]\biggr]\biggr).
\end{multline}
We have that
\[
	\operatorname E[\langle Z_{jt},Z_{js}\rangle_{\mathrm{HS}}\langle Z_{jt},Z_{jr}\rangle_{\mathrm{HS}}\mid F_{t-1}]
	=\langle u_{s-j},u_{t-j}\rangle\langle u_{r-j},u_{t-j}\rangle\operatorname E[\langle u_t,u_s\rangle\langle u_t,u_r\rangle\mid F_{t-1}],
\]
where $t-j>s>s-j\lessgtr r>r-j$. Consequently, using \autoref{lemma:expofinnerprod},
\begin{multline*}
	\operatorname E\bigl(\langle u_{s_1-j},u_{t-j}\rangle\langle u_{r_1-j},u_{t-j}\rangle\operatorname E[\langle u_t,u_{s_1}\rangle\langle u_t,u_{r_1}\rangle\mid F_{t-1}]\\
	\times\langle u_{s_2-j},u_{t-j}\rangle\langle u_{r_2-j},u_{t-j}\rangle\operatorname E[\langle u_t,u_{s_2}\rangle\langle u_t,u_{r_2}\rangle\mid F_{t-1}]\bigr)=0
\end{multline*}
if $r_1\ne r_2$ or $s_1\ne s_2$. It follows that
\begin{multline*}
	\operatorname E\bigg|2\sum_{s=l_n+3}^{t-l_n-1}\sum_{r=l_n+2}^{s-1}\operatorname E[\langle Z_{jt},Z_{js}\rangle_{\mathrm{HS}}\langle Z_{jt},Z_{jr}\rangle_{\mathrm{HS}}\mid F_{t-1}]\biggr|^2\\
	=4\sum_{s=l_n+3}^{t-l_n-1}\sum_{r=l_n+2}^{s-1}\operatorname E|\operatorname E[\langle Z_{jt},Z_{js}\rangle_{\mathrm{HS}}\langle Z_{jt},Z_{jr}\rangle_{\mathrm{HS}}\mid F_{t-1}]|^2	
	\le4\sigma^8\mu^2{t-2l_n-2\choose2}
	=O(t^2)
\end{multline*}
as $t\to\infty$. We now investigate
\begin{multline}\label{eq:expA_nt2}
	\operatorname E\bigl[\langle u_{s_1-j_1},u_{t-j_1}\rangle\langle u_{r_1-j_1},u_{t-j_1}\rangle\operatorname E[\langle u_t,u_{s_1}\rangle\langle u_t,u_{r_1}\rangle\mid F_{t-1}]\\
	\times\langle u_{s_2-j_2},u_{t-j_2}\rangle\langle u_{r_2-j_2},u_{t-j_2}\rangle\operatorname E[\langle u_t,u_{s_2}\rangle\langle u_t,u_{r_2}\rangle\mid F_{t-1}]\bigr],
\end{multline}
where $t-j_1>s_1>s_1-j_1\lessgtr r_1>r_1-j_1$ and $t-j_2>s_2>s_2-j_2\lessgtr r_2>r_2-j_2$. If  $r_1-j_1\ne r_2-j_2$, the expected value in \eqref{eq:expA_nt2} is equal to $0$. Thus, we assume that $r_1-j_1=r_2-j_2$ (this implies that $r_1>r_2$ since $j_1>j_2$). Also, $r_2=r_1-j_1+j_2>r_1-j_1$. Hence, the expected value is equal to $0$ if  $r_2\ne s_2-j_2$. We assume that $r_2=s_2-j_2$ ($s_2=r_2+j_2=r_1-j_1+2j_2$). Since $t-j_2\ge t-l_n+1$ and $r_1\le t-l_n-2$, $r_1=r_1-j_1+2j_2$, which implies that $j_1=2j_2$ or $r_1=s_1-j_1$. Otherwise the expected in \eqref{eq:expA_nt2} value is equal to $0$. We conclude that the second term on the right hand side of \eqref{eq:A_2nt^2} is $O(p_n^2t+p_nt^2)$, $\operatorname EA_{2nt}^2=O(p_nt^2+p_n^2t)$ and, by Minkowski's inequality,
\[
	n^{-4}\operatorname E\biggl|\sum_{t=2l_n+3}^nA_{2nt}\biggr|^2
	\le n^{-4}\operatorname E\biggl|\sum_{t=2l_n+3}^n(\operatorname EA_{2nt}^2)^{1/2}\biggr|^2
	=O(p_n)
	\quad\text{as}\quad n\to\infty.
\]

Lastly, we show that $n^{-4}\operatorname E|\sum_{t=2l_n+3}^nA_{3nt}|^2=o(p_n^2)$ as $n\to\infty$. We have that
\begin{equation}\label{eq:A_3nt^2}
	n^{-4}\operatorname E\bigg|\sum_{t=2l_n+3}^nA_{3nt}\biggr|^2
	=n^{-4}\sum_{t=2l_n+3}^n\operatorname EA_{3nt}^2
	+2n^{-4}\sum_{t_1=2l_n+4}^n\sum_{t_2=2l_n+3}^{t_1-1}\operatorname E[A_{3nt_1}A_{3nt_2}].
\end{equation}
Since $\operatorname E[|\langle Z_{jt},Z_{js}\rangle_{\mathrm{HS}}|^2\mid F_{t-1}]=\opnorm{\mathcal C(0)}_2^4$ (see \autoref{lemma:HSsq} below), we obtain
\begin{multline*}
	\operatorname EA_{3nt}^2
	=\sum_{j=1}^{l_n}k_{nj}^8\operatorname E\bigg|\sum_{s=l_n+2}^{t-l_n-1}(\operatorname E[|\langle Z_{jt},Z_{js}\rangle_{\mathrm{HS}}|^2\mid F_{t-1}]-\opnorm{\mathcal C(0)}_2^4)\biggr|^2
	+2\sum_{j_1=2}^{l_n}\sum_{j_2=1}^{j_1-1}k_{nj_1}^4k_{nj_2}^4\\
	\times\sum_{s_1=l_n+2}^{t-l_n-1}\sum_{s_2=l_n+2}^{t-l_n-1}\operatorname{Cov}[\operatorname E[|\langle Z_{j_1t},Z_{j_1s_1}\rangle_{\mathrm{HS}}|^2\mid F_{t-1}],\operatorname E[|\langle Z_{j_2t},Z_{j_2s_2}\rangle_{\mathrm{HS}}|^2\mid F_{t-1}]]
\end{multline*}
and
\begin{multline*}
	\operatorname E\bigg|\sum_{s=l_n+2}^{t-l_n-1}(\operatorname E[|\langle Z_{jt},Z_{js}\rangle_{\mathrm{HS}}|^2\mid F_{t-1}]-\opnorm{\mathcal C(0)}_2^4)\biggr|^2
	=\sum_{s=l_n+2}^{t-l_n-1}\operatorname{Var}[\operatorname E[|\langle Z_{jt},Z_{js}\rangle_{\mathrm{HS}}|^2\mid F_{t-1}]]\\
	\qquad+2\sum_{s=l_n+3}^{t-l_n-1}\sum_{r=l_n+2}^{s-1}\operatorname{Cov}[\operatorname E[|\langle Z_{jt},Z_{js}\rangle_{\mathrm{HS}}|^2\mid F_{t-1}],\operatorname E[|\langle Z_{jt},Z_{jr}\rangle_{\mathrm{HS}}|^2\mid F_{t-1}]].
\end{multline*}
Using the Cauchy-Schwarz inequality, the fact that $u_t$ is independent of $F_{t-1}$, $u_{t-j}$, $u_s$ and $u_{s-j}$ are $F_{t-1}$-measurable and the independence of $u_t$'s (let us observe that $t-j\ge t-l_n$ and $s\le t-l_n-1$),
\[
	\operatorname E|\operatorname E[|\langle Z_{jt},Z_{js}\rangle_{\mathrm{HS}}|^2\mid F_{t-1}]|^2
	\le\operatorname E|\operatorname E[\|u_t\|^2\|u_{t-j}\|^2\|u_s\|^2\|u_{s-j}\|^2\mid F_{t-1}]|^2
	=\sigma^4\mu_4^3<\infty.
\]
Since
\begin{equation}\label{eq:varbound}
	\operatorname{Var}[\operatorname E[|\langle Z_{jt},Z_{js}\rangle_{\mathrm{HS}}|^2\mid F_{t-1}]]
	\le\operatorname E|\operatorname E[|\langle Z_{jt},Z_{js}\rangle_{\mathrm{HS}}|^2\mid F_{t-1}]|^2\\
	\le\sigma^4\mu_4^3
\end{equation}
we obtain $\sum_{s=l_n+2}^{t-l_n-1}\operatorname{Var}[\operatorname E[|\langle Z_{jt},Z_{js}\rangle_{\mathrm{HS}}|^2\mid F_{t-1}]]=O(t)$ as $t\to\infty$. Using the fact that
\begin{multline*}
	|\operatorname{Cov}[\operatorname E[|\langle Z_{jt},Z_{js}\rangle_{\mathrm{HS}}|^2\mid F_{t-1}],\operatorname E[|\langle Z_{jt},Z_{jr}\rangle_{\mathrm{HS}}|^2\mid F_{t-1}]]|\\
	\le(\operatorname{Var}[\operatorname E[|\langle Z_{jt},Z_{js}\rangle_{\mathrm{HS}}|^2\mid F_{t-1}]])^{1/2}(\operatorname{Var}[\operatorname E[|\langle Z_{jt},Z_{jr}\rangle_{\mathrm{HS}}|^2\mid F_{t-1}]])^{1/2}
	\le\sigma^4\mu_4^3,
\end{multline*}
we obtain
\begin{multline*}
	2\sum_{s=l_n+3}^{t-l_n-1}\sum_{r=l_n+2}^{s-1}|\operatorname{Cov}[\operatorname E[|\langle Z_{jt},Z_{js}\rangle_{\mathrm{HS}}|^2\mid F_{t-1}],\operatorname E[|\langle Z_{jt},Z_{jr}\rangle_{\mathrm{HS}}|^2\mid F_{t-1}]]|\\
	\le2\sigma^4\mu_4^3\sum_{s=l_n+3}^{t-l_n-1}(s-l_n-2)=\sigma^4\mu_4^3(t-2l_n-2)(t-2l_n-3)=O(t^2)
\end{multline*}
as $t\to\infty$. We have that
\begin{align*}
	&2\sum_{j_1=2}^{l_n}\sum_{j_2=1}^{j_1-1}k_{nj_1}^4k_{nj_2}^4
	\sum_{s_1=l_n+2}^{t-l_n-1}\sum_{s_2=l_n+2}^{t-l_n-1}
	\operatorname{Cov}(
	\operatorname E[|\langle Z_{j_1t},Z_{j_1s_1}\rangle_{\mathrm{HS}}|^2\mid F_{t-1}],
	\operatorname E[|\langle Z_{j_2t},Z_{j_2s_2}\rangle_{\mathrm{HS}}|^2\mid F_{t-1}])=\\
	&=O(p_n^2t^2)	
\end{align*}
since
\begin{multline*}
	\operatorname{Cov}(
	\operatorname E[|\langle Z_{j_1t},Z_{j_1s_1}\rangle_{\mathrm{HS}}|^2\mid F_{t-1}],
	\operatorname E[|\langle Z_{j_2t},Z_{j_2s_2}\rangle_{\mathrm{HS}}|^2\mid F_{t-1}])\\
	\le(\operatorname{Var}[\operatorname E[|\langle Z_{j_1t},Z_{j_1s_1}\rangle_{\mathrm{HS}}|^2\mid F_{t-1}]])^{1/2}(\operatorname{Var}[\operatorname E[|\langle Z_{j_2t},Z_{j_2s_2}\rangle_{\mathrm{HS}}|^2\mid F_{t-1}]])^{1/2}
	\le\sigma^4\mu_4^3.
\end{multline*}
Now we bound the second term on the right hand side of \eqref{eq:A_3nt^2}. We have that
\begin{multline*}
	\operatorname E[A_{3nt_1}A_{3nt_2}]
	=\sum_{j_1=1}^{l_n}k_{nj_1}^4\sum_{j_2=1}^{l_n}k_{nj_2}^4\sum_{s_1=l_n+2}^{t_1-l_n-1}\sum_{s_2=l_n+2}^{t_2-l_n-1}\\
	\times\operatorname{Cov}[\operatorname E[|\langle Z_{j_1t_1},Z_{j_1s_1}\rangle_{\mathrm{HS}}|^2\mid F_{t_1-1}],\operatorname E[|\langle Z_{j_2t_2},Z_{j_2s_2}\rangle_{\mathrm{HS}}|^2\mid F_{t_2-1}]],
\end{multline*}
where $t_1>t_2$, and $\operatorname E[|\langle Z_{j_1t_1},Z_{j_1s_1}\rangle_{\mathrm{HS}}|^2\mid F_{t_1-1}]=|\langle u_{s_1-j_1},u_{t_1-j_1}\rangle|^2\operatorname E[|\langle u_{t_1},u_{s_1}\rangle|^2\mid F_{t_1-1}]$. Let us investigate
\[
	\operatorname{Cov}[|\langle u_{s_1-j_1},u_{t_1-j_1}\rangle|^2\operatorname E[|\langle u_{t_1},u_{s_1}\rangle|^2\mid F_{t_1-1}],
	|\langle u_{s_2-j_2},u_{t_2-j_2}\rangle|^2\operatorname E[|\langle u_{t_2},u_{s_2}\rangle|^2\mid F_{t_2-1}]],
\]
where $t_1>t_2$. Let us observe that $t_1-j_1\ne s_2-j_2$ since $t_1-j_1\ge t_1-l_n$ and $s_2-j_2\le t_2-l_n-2<t_1-l_n-2$ ($t_1>t_2$). Also, $t_1-j_1\ne s_2$ since $t_1-j_1\ge t_1-l_n$ and $s_2\le t_2-l_n-1\le t_1-l_n-2$.
Thus, the covariance is equal to $0$ unless at least one of the following conditions is true:
\begin{inparaenum}[(i)]
\item$s_1-j_1=s_2-j_2$;
\item$s_1-j_1=t_2-j_2$;
\item$s_1-j_1=s_2$;
\item$t_1-j_1=t_2-j_2$;
\item$s_1=s_2-j_2$;
\item$s_1=t_2-j_2$;
\item$s_1=s_2$.
\end{inparaenum}
If any of these conditions is true, then $2n^{-4}\sum_{t_1=2l_n+4}^n\sum_{t_2=2l_n+3}^{t_1-1}\operatorname E[A_{3nt_1}A_{3nt_2}]$ is either $O(p_n)$ or $O(p_n^2/n)$ as $n\to\infty$ and we conclude that $n^{-4}\operatorname E|\sum_{t=2l_n+3}^nA_{3nt}|^2=o(p_n^2)$ as $n\to\infty$. This completes the proof of \autoref{thm:main}.
\end{proof}

\subsection{Auxilliary lemmas}
We prove two auxiliary lemmas that are used in the proof of \autoref{thm:main}. Let us recall that $Z_{jt}=u_t\otimes u_{t-j}$ and $w_{jts}=2\langle Z_{jt},Z_{js}\rangle_{\mathrm{HS}}$.
\begin{lemma}\label{lemma:HSsq}
Suppose that the conditions of \autoref{thm:main} hold. Suppose that $1\le j\le l_n$, $2l_n+3\le t\le n$ and $l_n+2\le s\le t-l_n-1$, where $\{l_n\}_{n\ge1}$ is a real sequence. Then
\[
	\operatorname E[\operatorname E[|\langle Z_{jt},Z_{js}\rangle_{\mathrm{HS}}|^2\mid F_{t-1}]]
	=\opnorm{\mathcal C(0)}_2^4
\]
\end{lemma}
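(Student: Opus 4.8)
The plan is to reduce the assertion, via the tower property of conditional expectation, to the unconditional identity $\operatorname E|\langle Z_{jt},Z_{js}\rangle_{\mathrm{HS}}|^2=\opnorm{\mathcal C(0)}_2^4$, and then to compute this second moment by exploiting the mutual independence of the underlying variables. First I would observe that
\[
	|\langle Z_{jt},Z_{js}\rangle_{\mathrm{HS}}|^2
	\le\opnorm{Z_{jt}}_2^2\opnorm{Z_{js}}_2^2
	=\|u_t\|^2\|u_{t-j}\|^2\|u_s\|^2\|u_{s-j}\|^2,
\]
whose expectation is finite by \autoref{assumption:moment}, so $|\langle Z_{jt},Z_{js}\rangle_{\mathrm{HS}}|^2$ is integrable, the tower property applies, and it suffices to evaluate $\operatorname E|\langle Z_{jt},Z_{js}\rangle_{\mathrm{HS}}|^2$.

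Next I would use the identity $\langle x_1\otimes y_1,x_2\otimes y_2\rangle_{\mathrm{HS}}=\langle x_1,x_2\rangle\langle y_1,y_2\rangle$ for rank-one operators to write $\langle Z_{jt},Z_{js}\rangle_{\mathrm{HS}}=\langle u_t,u_s\rangle\langle u_{t-j},u_{s-j}\rangle$. The key bookkeeping step is to check that the four time indices $t$, $s$, $t-j$ and $s-j$ are pairwise distinct under the stated constraints: $t\ne t-j$ and $s\ne s-j$ because $j\ge1$; $t>s>s-j$ because $s\le t-l_n-1<t$; and $t-j>s$ because $t-j\ge t-l_n$ while $s\le t-l_n-1$. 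Hence under $H_{0,\text{iid}}$ the random pairs $(u_t,u_s)$ and $(u_{t-j},u_{s-j})$ depend on disjoint collections of the i.i.d.\ sequence, so they are independent, and therefore so are the scalar random variables $\langle u_t,u_s\rangle$ and $\langle u_{t-j},u_{s-j}\rangle$. This gives
\[
	\operatorname E|\langle Z_{jt},Z_{js}\rangle_{\mathrm{HS}}|^2
	=\operatorname E|\langle u_t,u_s\rangle|^2\cdot\operatorname E|\langle u_{t-j},u_{s-j}\rangle|^2.
\]

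Finally I would apply \autoref{lemma:expofinnerprodsq} to each factor: $u_t$ and $u_s$ are independent and identically distributed with zero mean and finite second moment (the latter from \autoref{assumption:moment}), so $\operatorname E|\langle u_t,u_s\rangle|^2=\opnorm{\operatorname E[u_t\otimes u_t]}_2^2=\opnorm{\mathcal C(0)}_2^2$, and likewise $\operatorname E|\langle u_{t-j},u_{s-j}\rangle|^2=\opnorm{\mathcal C(0)}_2^2$. Multiplying the two factors yields $\opnorm{\mathcal C(0)}_2^4$, and combining with the tower-property reduction completes the proof. There is no substantial obstacle here; the only point demanding care is the index-distinctness verification in the second step, which is exactly why the lemma is stated for the restricted ranges $2l_n+3\le t\le n$ and $l_n+2\le s\le t-l_n-1$ rather than for arbitrary $t$ and $s$. (Alternatively, one could avoid the tower property and compute the inner conditional expectation explicitly as $\langle\mathcal C(0)u_s,u_s\rangle\,|\langle u_{t-j},u_{s-j}\rangle|^2$, using that $u_s,u_{t-j},u_{s-j}$ are $F_{t-1}$-measurable while $u_t$ is independent of $F_{t-1}$, and then take expectations; both routes are routine once the indices are separated.)
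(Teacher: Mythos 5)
Your proposal is correct and takes essentially the same route as the paper: both factor the Hilbert--Schmidt inner product as $\langle u_t,u_s\rangle\langle u_{t-j},u_{s-j}\rangle$, use the index separation ($t-j\ge t-l_n>s\ge l_n+2$, etc.) to obtain independence under $H_{0,\text{iid}}$, and invoke \autoref{lemma:expofinnerprodsq}. The only cosmetic difference is that you remove the conditioning at the outset via the tower property and apply \autoref{lemma:expofinnerprodsq} twice, whereas the paper pulls out the $F_{t-1}$-measurable factor $\langle u_{s-j},u_{t-j}\rangle$ and evaluates the remaining iterated expectation by an explicit orthonormal-basis/Parseval computation; both routes are valid.
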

\begin{proof}
Since $\langle Z_{jt},Z_{js}\rangle_{\mathrm{HS}}=\langle u_t,u_s\rangle\langle u_{s-j},u_{t-j}\rangle$ and $\langle u_{s-j},u_{t-j}\rangle$ is $F_{t-1}$-measurable, we obtain
\[
	\operatorname E[\operatorname E[|\langle Z_{jt},Z_{js}\rangle_{\mathrm{HS}}|^2\mid F_{t-1}]]
	=\opnorm{\mathcal C(0)}_2^2\operatorname E[\operatorname E[|\langle u_t,u_s\rangle|^2\mid F_{t-1}]]	
\]
using the independence of $u_t$'s (we have that $t-j\ge t-l_n$ and $s\le t-l_n-1$) and \autoref{lemma:expofinnerprodsq}. Let $\{ e_n\}_{n\ge1}$ and  be an orthonormal basis of $\mathbb H$. Then
\begin{align*}
	\operatorname E[\operatorname E[|\langle u_t,u_s\rangle|^2\mid F_{t-1}]]
	&=\operatorname E\Bigl[\operatorname E\Big[\sum_{n=1}^\infty\langle u_t,e_n\rangle\langle e_n,u_s\rangle\sum_{m=1}^\infty\langle u_s,e_m\rangle\langle e_m,u_t\rangle\mid F_{t-1}\Bigr]\Bigr]\\
	&=\operatorname E\Bigl[\sum_{n=1}^\infty\sum_{m=1}^\infty\operatorname E[\langle u_t,e_n\rangle\langle e_n,u_s\rangle\langle u_s,e_m\rangle\langle e_m,u_t\rangle\mid F_{t-1}]\Bigr]\\
	&=\operatorname E\Bigl[\sum_{n=1}^\infty\sum_{m=1}^\infty\langle e_n,u_s\rangle\langle u_s,e_m\rangle\operatorname E[\langle u_t,e_n\rangle\langle e_m,u_t\rangle]\Bigr]\\
	&=\sum_{n=1}^\infty\sum_{m=1}^\infty\operatorname E[\langle e_n,u_0\rangle\langle u_0,e_m\rangle]\operatorname E[\langle u_0,e_n\rangle\langle e_m,u_0\rangle]\\
	&=\opnorm{\mathcal C(0)}_2^2
\end{align*}
by Parseval's identity and the definition of the Hilbert-Schmidt norm. The proof is complete.
\end{proof}

\begin{lemma}\label{lemma:expectation0}
Suppose that the conditions of \autoref{thm:main} hold. If $t_1>s_1$ and $t_2>s_2$ and $1\le j_1,j_2\le n-2$, then
\[
	\operatorname E[w_{j_1t_1s_1}w_{j_2t_2s_2}]=0
\]
unless the following two conditions hold:
\begin{inparaenum}[(a)]
\item$t_1=t_2$
\item\label{cond:exp_b}$j_1=j_2$ and $s_1=s_2$ or $s_1=t_1-j_2$ and $s_2=t_1-j_1$.
\end{inparaenum}
\end{lemma}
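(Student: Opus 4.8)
The plan is to reduce the statement to a combinatorial fact about the eight integer indices occurring in the product. First I would write $\langle Z_{jt},Z_{js}\rangle_{\mathrm{HS}}=\langle u_t,u_s\rangle\langle u_{t-j},u_{s-j}\rangle$, so that
\[
	w_{j_1t_1s_1}w_{j_2t_2s_2}=4\langle u_{t_1},u_{s_1}\rangle\langle u_{t_1-j_1},u_{s_1-j_1}\rangle\langle u_{t_2},u_{s_2}\rangle\langle u_{t_2-j_2},u_{s_2-j_2}\rangle
\]
is four times a product of four scalar inner products, each of the form $\langle u_a,u_b\rangle$ with $a\ne b$, built from the multiset of indices $\mathcal I=\{t_1,t_1-j_1,s_1,s_1-j_1,t_2,t_2-j_2,s_2,s_2-j_2\}$. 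Within each group of four indices coming from one of the two $w$'s, the only possible coincidence is $s_i=t_i-j_i$, so no value occurs in $\mathcal I$ with multiplicity greater than $4$; since the $u_t$ are i.i.d.\ with $\operatorname E\|u_0\|^4<\infty$, the expectation is well defined and finite.

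The key reduction is that if some value $v$ occurs in $\mathcal I$ with multiplicity exactly one, then $\operatorname E[w_{j_1t_1s_1}w_{j_2t_2s_2}]=0$. Indeed, $v$ then lies in exactly one of the four factors, say $\langle u_v,u_w\rangle$ with $w\ne v$; collecting the remaining three scalar factors times $u_w$ into an $\mathbb H$-valued random element $Y$, which depends only on $\{u_a:a\ne v\}$ and is therefore independent of $u_v$, we have $w_{j_1t_1s_1}w_{j_2t_2s_2}=4\langle u_v,Y\rangle$ with $u_v$ independent of $Y$, $\operatorname E u_v=0$, and $\operatorname E\|u_v\|,\operatorname E\|Y\|<\infty$. \autoref{lemma:expofinnerprod} then gives $\operatorname E[w_{j_1t_1s_1}w_{j_2t_2s_2}]=0$. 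It thus suffices to prove that if every value in $\mathcal I$ has multiplicity at least $2$, then conditions (a) and (b) both hold.

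Here I would use the orderings. Since $t_i>s_i$ and $j_i\ge1$, within group $i$ the value $t_i$ is the strict maximum and $s_i-j_i$ the strict minimum of its four members. Hence the largest value of $\mathcal I$, namely $\max(t_1,t_2)$, occurs with multiplicity one unless $t_1=t_2=:t$, which forces (a). Deleting the two copies of $t$, the remaining six values are all strictly less than $t$ and so must match among themselves; the smallest of them equals $\min(s_1-j_1,s_2-j_2)$, which is a strict minimum within its own group, so some index has multiplicity one unless $s_1-j_1=s_2-j_2$. Deleting the two equal copies, the four values $\{s_1,t-j_1,s_2,t-j_2\}$, all strictly larger than $s_1-j_1$, must themselves form a partition with all blocks of size at least $2$; enumerating the possibilities — all four equal, or one of the three $2+2$ pairings — and simplifying each using $s_1-j_1=s_2-j_2$, every case collapses to either $s_1=s_2$ and $j_1=j_2$, or $s_1=t-j_2$ and $s_2=t-j_1$, which with $t_1=t_2=t$ is exactly condition (b).

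The routine parts are the initial rewriting and the integrability remark; the only step demanding care is the last one — using the strict maximum/minimum within each group to pin down $t_1=t_2$ and $s_1-j_1=s_2-j_2$, and then checking that each of the three pairings of $\{s_1,t-j_1,s_2,t-j_2\}$ reduces, via $s_1-j_1=s_2-j_2$, to one of the two alternatives in (b). I do not expect any analytic obstacle beyond this finite case check.
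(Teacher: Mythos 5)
Your proof is correct and follows essentially the same route as the paper's: rewrite $w_{j_1t_1s_1}w_{j_2t_2s_2}$ as a product of scalar inner products, kill the expectation via \autoref{lemma:expofinnerprod} whenever some time index appears only once, use the strict maximum/minimum of each group to force $t_1=t_2$ and $s_1-j_1=s_2-j_2$, and finish with a finite case check on the remaining indices. The only difference is cosmetic: you enumerate the admissible pairings of $\{s_1,t-j_1,s_2,t-j_2\}$, while the paper assumes the negation of condition (b) and shows $s_1$ would then be an unmatched index — the two case analyses are equivalent.
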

\begin{proof}
We start by noticing that
\[
	\operatorname E[w_{j_1t_1s_1}w_{j_2t_2s_2}]
	=4\operatorname E[\langle u_{t_1},u_{s_1}\rangle\langle u_{s_1-j_1},u_{t_1-j_1}\rangle\langle u_{t_2},u_{s_2}\rangle\langle u_{s_2-j_2},u_{t_2-j_2}\rangle],
\]
where $t_1>t_1-j_1\lessgtr s_1>s_1-j_1$ and $t_2>t_2-j_2\lessgtr s_2>s_2-j_2$. It follows that $\operatorname E[w_{j_1t_1s_1}w_{j_2t_2s_2}]=0$ if $t_1\ne t_2$ or $s_1-j_1\ne s_2-j_2$ by \autoref{lemma:expofinnerprod}. Since \eqref{cond:exp_b} implies that $s_1-j_1=s_2-j_2$, we show that $\operatorname E[w_{j_1t_1s_1}w_{j_2t_2s_2}]=0$ if \eqref{cond:exp_b} does not hold.

Let us assume that $t_1=t_2$ and $s_1-j_1=s_2-j_2$ (otherwise $\operatorname E[w_{j_1t_1s_1}w_{j_2t_2s_2}]=0$). Any two of the conditions $j_1=j_2$, $s_1=s_2$ and $s_1-j_1=s_2-j_2$ imply the third one. Similarly, any two of the conditions $s_1=t_1-j_2$, $s_2=t_1-j_1$ and $s_1-j_1=s_2-j_2$ imply the third one. Hence, we only need to consider the case, when $j_1\ne j_2$, $s_1\ne s_2$, $s_1\ne t_1-j_2$ and $s_2\ne t_1-j_1$. Since $s_2=s_1-(j_1-j_2)$, we have that
\begin{align*}
	t_1&>t_1-j_1\lessgtr s_1>s_1-j_1,\\
	t_1&>t_1-j_2\lessgtr s_1-(j_1-j_2)>s_1-j_1.
\end{align*}
If $s_1$ is not equal to any other index of $u_t$'s, $\operatorname E[w_{j_1t_1s_1}w_{j_2t_2s_2}]=0$ by \autoref{lemma:expofinnerprod}. The only option is $s_1=t_1-j_1$ and $t_1-j_2=s_1-(j_1-j_2)$ since we assume that $j_1\ne j_2$ and $s_1\ne t_1-j_2$. But both of these equalities cannot be true since we assume that $j_1\ne j_2$. This shows that $\operatorname E[w_{j_1t_1s_1}w_{j_2t_2s_2}]=0$ if $j_1\ne j_2$, $s_1\ne s_2$, $s_1\ne t_1-j_2$ and $s_2\ne t_1-j_1$. The proof is complete.
\end{proof}

\section{Auxiliary lemma for power transformation}\label{sec:app:pt}
Here we establish the relationship between the trace of the operator $\operatorname E[X\otimes Y\tilde\otimes X\otimes Y]$ and the trace of the operator $\operatorname E[X\otimes X]$, where $X$ and $Y$ are two independent and identically distributed random elements with values in the separable Hilbert space $\mathbb H$.
\begin{lemma}\label{lemma:covcov}
Let us suppose that $X$ and $Y$ are independent and identically distributed random elements with values in the separable Hilbert space $\mathbb H$ such that $\operatorname EX=0$ and $\operatorname E\|X\|^4<\infty$. Let us denote $\mathcal A=\operatorname E[X\otimes Y\tilde\otimes X\otimes Y]$. Then
\[
	\operatorname{Tr}\mathcal A
	=[\operatorname{Tr}(\operatorname E[X\otimes X])]^2,
	\quad
	\operatorname{Tr}\mathcal A^2
	=[\operatorname{Tr}(\operatorname E[X\otimes X]^2)]^2
	\quad\text{and}\quad
	\operatorname{Tr}\mathcal A^3
	=[\operatorname{Tr}(\operatorname E[X\otimes X]^3)]^2,
\]
where $A^k$ denotes the $k$-fold compositon of the operator $A$ with itself.
\end{lemma}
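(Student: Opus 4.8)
The plan is to simultaneously diagonalize $\mathcal A$ and $\mathcal C:=\operatorname E[X\otimes X]$ and to read off the three traces directly from the eigenvalues. First I would check that $X\otimes Y$ is a centred second-order random element of $S_2(\mathbb H)$: since $X$ and $Y$ are independent with $\operatorname EX=0$ we have $\operatorname E[X\otimes Y]=0$, and by independence and the Cauchy--Schwarz inequality
\[
	\operatorname E\opnorm{X\otimes Y}_2^2
	=\operatorname E[\|X\|^2\|Y\|^2]
	=(\operatorname E\|X\|^2)^2
	\le\operatorname E\|X\|^4<\infty .
\]
Hence $\mathcal A=\operatorname E[X\otimes Y\tilde\otimes X\otimes Y]$ is a well-defined, self-adjoint, positive, nuclear operator on $S_2(\mathbb H)$, namely the covariance operator of $X\otimes Y$, and $\mathcal C$ is likewise self-adjoint, positive and nuclear. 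I would then fix an orthonormal basis $\{v_k\}_{k\ge1}$ of $\mathbb H$ consisting of eigenvectors of $\mathcal C$, with eigenvalues $\gamma_k\ge0$ satisfying $\sum_k\gamma_k=\operatorname{Tr}\mathcal C<\infty$, and recall that $\{v_k\otimes v_l\}_{k,l\ge1}$ is then an orthonormal basis of $S_2(\mathbb H)$.

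The key step is the computation of the bilinear form of $\mathcal A$ in this basis. Using that the covariance operator of the centred element $X\otimes Y$ satisfies $\langle\mathcal A(c),d\rangle_{\mathrm{HS}}=\operatorname E[\langle c,X\otimes Y\rangle_{\mathrm{HS}}\langle X\otimes Y,d\rangle_{\mathrm{HS}}]$, together with $\langle X\otimes Y,v_a\otimes v_b\rangle_{\mathrm{HS}}=\langle X,v_a\rangle\langle Y,v_b\rangle$ and the independence of $X$ and $Y$, one gets for all $i,j,k,l$
\[
	\langle\mathcal A(v_k\otimes v_l),v_i\otimes v_j\rangle_{\mathrm{HS}}
	=\operatorname E[\langle X,v_k\rangle\langle X,v_i\rangle]\,\operatorname E[\langle Y,v_l\rangle\langle Y,v_j\rangle]
	=\langle\mathcal C v_k,v_i\rangle\langle\mathcal C v_l,v_j\rangle
	=\gamma_k\gamma_l\,\delta_{ik}\delta_{jl}.
\]
Thus $\mathcal A(v_k\otimes v_l)=\gamma_k\gamma_l\,(v_k\otimes v_l)$, so $\{v_k\otimes v_l\}_{k,l\ge1}$ is a complete orthonormal system of eigenvectors of $\mathcal A$ with eigenvalues $\{\gamma_k\gamma_l\}_{k,l\ge1}$ (consistent with $\operatorname{Tr}\mathcal A=\sum_{k,l}\gamma_k\gamma_l=(\sum_k\gamma_k)^2<\infty$).

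Finally, since $\mathcal A$ is positive and self-adjoint, $\mathcal A^m$ has eigenvalues $(\gamma_k\gamma_l)^m$ and is nuclear for each $m\ge1$, so $\operatorname{Tr}\mathcal A^m=\sum_{k,l\ge1}(\gamma_k\gamma_l)^m$; as every term is nonnegative the double series factors and
\[
	\operatorname{Tr}\mathcal A^m
	=\Bigl(\sum_{k\ge1}\gamma_k^m\Bigr)^2
	=(\operatorname{Tr}\mathcal C^m)^2,\qquad m=1,2,3,
\]
where $\sum_k\gamma_k^m$ converges because $0\le\gamma_k\le\gamma_1$ and $\sum_k\gamma_k<\infty$. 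Recalling $\mathcal C=\operatorname E[X\otimes X]$ yields the three stated identities. I do not anticipate a genuine obstacle: the only points needing care are the finiteness/nuclearity bookkeeping, which is handled by the Cauchy--Schwarz bound on $\operatorname E\opnorm{X\otimes Y}_2^2$ (this is where $\operatorname E\|X\|^4<\infty$ enters), and the factorization $\operatorname E[\langle X,v_k\rangle\langle X,v_i\rangle\langle Y,v_l\rangle\langle Y,v_j\rangle]=\operatorname E[\langle X,v_k\rangle\langle X,v_i\rangle]\operatorname E[\langle Y,v_l\rangle\langle Y,v_j\rangle]$, which is immediate from independence; everything else is a direct consequence of working in the eigenbasis of $\mathcal C$.
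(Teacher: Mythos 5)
Your proof is correct, and it takes a slightly different and in fact more economical route than the paper. The paper works in the same eigenbasis $\{v_k\}$ of $\mathcal C=\operatorname E[X\otimes X]$, but it never identifies the spectrum of $\mathcal A$: it expands each of $\operatorname{Tr}\mathcal A$, $\operatorname{Tr}\mathcal A^2$ and $\operatorname{Tr}\mathcal A^3$ separately as multi-index sums of products of moments $\operatorname E[X_iX_k]$, $\operatorname E[Y_jY_l]$, factoring by independence, with the bookkeeping growing with each power (six indices for $\mathcal A^3$). You instead compute the single bilinear form $\langle\mathcal A(v_k\otimes v_l),v_i\otimes v_j\rangle_{\mathrm{HS}}=\gamma_k\gamma_l\delta_{ik}\delta_{jl}$, conclude that $\{v_k\otimes v_l\}$ diagonalizes $\mathcal A$ with eigenvalues $\gamma_k\gamma_l$ (i.e.\ $\mathcal A$ acts as $\mathcal C$ "tensored with" $\mathcal C$ on $S_2(\mathbb H)$), and then obtain $\operatorname{Tr}\mathcal A^m=\bigl(\sum_k\gamma_k^m\bigr)^2=(\operatorname{Tr}\mathcal C^m)^2$ for \emph{every} $m\ge1$ in one stroke, of which the lemma's three identities are the cases $m=1,2,3$; your spectral description also explains the representation $\opnorm{Z}_2^2=\sum\lambda_k\xi_k$ used in the main text. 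The finiteness bookkeeping is handled correctly (though note $\operatorname E\opnorm{X\otimes Y}_2^2=(\operatorname E\|X\|^2)^2$ already only needs second moments, so the fourth-moment hypothesis is not really where the argument bites), and the only conventions you rely on — existence of an orthonormal eigenbasis of the compact self-adjoint $\mathcal C$ including its kernel, and $\{v_k\otimes v_l\}$ being an orthonormal basis of $S_2(\mathbb H)$ — are standard. In short: same ingredients (eigenbasis of $\mathcal C$ plus independence), but your diagonalization-first organization is cleaner, avoids the per-power index chase, and yields the general-$m$ statement for free.
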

\begin{proof}
Let us suppose that $\{v_k\}_{k\ge1}$ are the eigenvectors of the covariance operator $\operatorname E[X\otimes X]$ and $\{\lambda_k\}_{k\ge1}$ are the corresponding eigenvalues. Also, let us denote $X_k=\langle X,v_k\rangle$, $\overline X_k=\langle v_k,X\rangle$, $Y_k=\langle Y,v_k\rangle$, $\overline Y_k=\langle v_k,Y\rangle$ for $k\ge1$. We have that $\operatorname E[X_i\overline X_j]=\lambda_i$ if $i=j$ and $0$ otherwise.
We have that
\[
	\operatorname{Tr}\mathcal A
	=\sum_{i,j\ge1}\langle\operatorname E[\overline X_iY_j(X\otimes Y)],e_i\otimes e_j\rangle_{\mathrm{HS}}
	=\sum_{i,j\ge1}\operatorname E|X_i|^2\operatorname E|Y_j|^2
	=[\operatorname{Tr}(\operatorname E[X\otimes X])]^2
\]
and
\begin{align*}
	\operatorname{Tr}\mathcal A^2
	&=\sum_{i,j\ge1}\langle\mathcal A\operatorname E\{\overline X_iY_j(X\otimes Y)\},v_i\otimes v_j\rangle_{\mathrm{HS}}\\
	&=\sum_{i,j\ge1}\operatorname E[\langle\operatorname E\{\overline X_iY_j(X\otimes Y)\},X\otimes Y\rangle_{\mathrm{HS}}X_i\overline Y_j]\\
	&=\sum_{i,j,k,l\ge1}\operatorname E[\langle\operatorname E\{\overline X_iY_j(X\otimes Y)\},v_k\otimes v_l\rangle\langle v_k\otimes v_l,X\otimes Y\rangle_{\mathrm{HS}} X_i\overline Y_j]\\
	&=\sum_{i,j,k,l\ge1}\operatorname E[\overline X_iY_jX_k\overline Y_l]\operatorname E[X_i\overline Y_j\overline X_kY_l]\\
	&=\sum_{i,j,k,l\ge1}|\operatorname E[\overline X_iX_k]|^2|\operatorname E[Y_j\overline Y_l]|^2\\
	&=\Bigl(\sum_{i\ge1}\lambda_i^2\Bigr)^2\\
	&=[\operatorname{Tr}(\operatorname E[X\otimes X]^2)]^2.
\end{align*}
Similarly,
\begin{align*}
	\operatorname{Tr}\mathcal A^3
	&=\sum_{i,j,k,l,u,v\ge1}\operatorname E[\overline X_kX_u]\operatorname E[Y_l\overline Y_v]\operatorname E[\overline X_uX_i]\operatorname E[Y_v\overline Y_j]\operatorname E[\overline X_iX_k]\operatorname E[Y_j\overline Y_l]\\
	&=\sum_{i,j\ge1}(\operatorname E|X_i|^2)^3(\operatorname E|Y_j|^2)^3\\
	&=\Bigl(\sum_{i\ge1}\lambda_i^3\Bigr)^2\\
	&=[\operatorname{Tr}(\operatorname E[X\otimes X]^3)]^2.
\end{align*}
The proof is complete.
\end{proof}

\section{Proof of  \autoref{thm:consistency}}
The bound on the estimators of the autocovariance operators in the following lemma is used in the proof of \autoref{thm:consistency}.
\begin{lemma}\label{lemma:cov_con}
Suppose that $\{u_t\}_{t\in\mathbb Z}$ satisfies \autoref{assumption:dependentseq}. Then
\[
	\sup_{j\in\mathbb Z}\operatorname E\opnorm{\hat{\mathcal C}_n(j)-\mathcal C(j)}_2^2=O(n^{-1})
	\quad\text{as}\quad n\to\infty.
\]
\end{lemma}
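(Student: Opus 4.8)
The plan is to split the error into a variance part and a squared bias part. For $0\le j<n$ one has $\operatorname E\hat{\mathcal C}_n(j)=(1-j/n)\mathcal C(j)$, so
\[
\operatorname E\opnorm{\hat{\mathcal C}_n(j)-\mathcal C(j)}_2^2
=\operatorname E\opnorm{\hat{\mathcal C}_n(j)-\operatorname E\hat{\mathcal C}_n(j)}_2^2+(j/n)^2\opnorm{\mathcal C(j)}_2^2 .
\]
The squared bias is at most $(j/n)^2\opnorm{\mathcal C(j)}_1^2$, which is $O(n^{-1})$ uniformly in $j$ by the summability $\sum_j\opnorm{\mathcal C(j)}_1^2<\infty$ in \autoref{assumption:dependentseq} (for $|j|\ge n$, where $\hat{\mathcal C}_n(j)=0$, the error equals $\opnorm{\mathcal C(j)}_2^2$ and is negligible for the same reason). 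So the content is in the variance term, which I would treat with the same cumulant bookkeeping as in the proof of \autoref{lemma:HSsq}.

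Writing $\hat{\mathcal C}_n(j)-\operatorname E\hat{\mathcal C}_n(j)=n^{-1}\sum_{t=j+1}^n(u_t\otimes u_{t-j}-\mathcal C(j))$ and expanding the squared Hilbert--Schmidt norm via $\langle a\otimes b,c\otimes d\rangle_{\mathrm{HS}}=\langle a,c\rangle\langle b,d\rangle$ together with $\operatorname E[u_t\otimes u_{t-j}]=\mathcal C(j)$ gives
\[
\operatorname E\opnorm{\hat{\mathcal C}_n(j)-\operatorname E\hat{\mathcal C}_n(j)}_2^2
=n^{-2}\sum_{t,s=j+1}^n\bigl(\operatorname E[\langle u_t,u_s\rangle\langle u_{t-j},u_{s-j}\rangle]-\opnorm{\mathcal C(j)}_2^2\bigr).
\]
Next I would fix an orthonormal basis $\{e_a\}$ of $\mathbb H$, expand each scalar inner product as in \eqref{eq:interchange}, justify interchanging $\operatorname E$ with the double basis series by Cauchy--Schwarz and $\operatorname E\|u_0\|^4<\infty$, and apply the moment--cumulant formula to each $\operatorname E[\langle u_t,e_a\rangle\langle u_s,e_a\rangle\langle u_{t-j},e_b\rangle\langle u_{s-j},e_b\rangle]$; since $\operatorname Eu_t=0$, only the joint fourth cumulant and the three products of pairwise covariances survive. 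Summing over $a,b$ and reducing by stationarity, the pairing of $u_t$ with $u_s$ and of $u_{t-j}$ with $u_{s-j}$ contributes $(\operatorname{Tr}\mathcal C(t-s))^2$; the pairing of $u_t$ with $u_{t-j}$ and of $u_s$ with $u_{s-j}$ contributes exactly $\opnorm{\mathcal C(j)}_2^2$, which cancels the subtracted constant; the pairing of $u_t$ with $u_{s-j}$ and of $u_s$ with $u_{t-j}$ contributes a quantity bounded by $\opnorm{\mathcal C(t-s+j)}_1\opnorm{\mathcal C(t-s-j)}_1$; and the joint fourth cumulant contributes $\operatorname{Tr}\mathcal K_{t-s+j,\,t-s,\,j}$, bounded in absolute value by $\opnorm{\mathcal K_{t-s+j,\,t-s,\,j}}_1$.

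Collecting these and grouping the double sum by $h=t-s$ (at most $n$ pairs $(t,s)$ for each $h$) then yields
\[
\operatorname E\opnorm{\hat{\mathcal C}_n(j)-\operatorname E\hat{\mathcal C}_n(j)}_2^2
\le n^{-1}\sum_{|h|<n}\Bigl[(\operatorname{Tr}\mathcal C(h))^2+\opnorm{\mathcal C(h+j)}_1\opnorm{\mathcal C(h-j)}_1+\opnorm{\mathcal K_{h+j,h,j}}_1\Bigr].
\]
The bracketed sum is bounded uniformly in $j$: $\sum_h(\operatorname{Tr}\mathcal C(h))^2\le\sum_h\opnorm{\mathcal C(h)}_1^2<\infty$ because $|\operatorname{Tr}A|\le\opnorm{A}_1$; by Cauchy--Schwarz in $h$ and a shift of summation index, $\sum_h\opnorm{\mathcal C(h+j)}_1\opnorm{\mathcal C(h-j)}_1\le\sum_h\opnorm{\mathcal C(h)}_1^2$ for every $j$; and $\sum_h\opnorm{\mathcal K_{h+j,h,j}}_1\le\sup_j\sum_h\opnorm{\mathcal K_{h+j,h,j}}_1<\infty$, which is exactly the second condition in \autoref{assumption:dependentseq}. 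Hence the variance is $O(n^{-1})$ uniformly in $j$, and together with the bias estimate this proves the lemma.

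The one step requiring care is the fourth-cumulant term: one must use the right \emph{matricization} of the cumulant tensor $\operatorname{cum}(u_t,u_{t-j},u_s,u_{s-j})$ --- the one whose range slots are those of $u_t,u_{t-j}$ and whose domain slots are those of $u_s,u_{s-j}$ --- so that the basis contraction $\sum_{a,b}\operatorname{cum}(\langle u_t,e_a\rangle,\langle u_s,e_a\rangle,\langle u_{t-j},e_b\rangle,\langle u_{s-j},e_b\rangle)$ is a genuine operator trace, and then verify that the stationarity shift turns it into $\operatorname{Tr}\mathcal K_{h+j,h,j}$ so that the summability hypothesis applies verbatim. The interchange-of-limits steps are routine and are handled exactly as in the proof of \autoref{lemma:HSsq}.
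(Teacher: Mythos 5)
Your proof is correct and follows essentially the same route as the paper's: split into bias and variance, bound the bias using the summability of the autocovariances, and control the variance by decomposing the fourth moment into the joint cumulant plus the pairwise-covariance products, which yields exactly the paper's bounds $\opnorm{\mathcal C(h)}_1^2$, $\opnorm{\mathcal C(h+j)}_1\opnorm{\mathcal C(h-j)}_1$ and $\opnorm{\mathcal K_{h+j,h,j}}_1$ summed over $h=t-s$, uniformly in $j$. The only cosmetic differences are that you use the exact orthogonal bias--variance decomposition where the paper uses $(a+b)^2\le 2a^2+2b^2$, and you expand the scalar inner products directly rather than via Parseval's identity in a double orthonormal basis.
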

\begin{proof}
Using the triangle inequality and the inequality $(a+b)^2\le2a^2+2b^2$ for $a,b\in\mathbb R$,
\begin{align*}
	\opnorm{\hat{\mathcal C}_n(j)-\mathcal C(j)}_2^2
	&=\opnormBig{n^{-1}\sum_{t=j+1}^n[u_t\otimes u_{t-j}-\mathcal C(j)]-n^{-1}j\mathcal C(j)}_2^2\\
	&\le\Bigl[\opnormBig{n^{-1}\sum_{t=j+1}^n[u_t\otimes u_{t-j}-\mathcal C(j)]}_2+n^{-1}|j|\opnorm{\mathcal C(j)}_2\Bigr]^2\\
	&\le2\opnormBig{n^{-1}\sum_{t=j+1}^n[u_t\otimes u_{t-j}-\mathcal C(j)]}_2^2+2n^{-2}|j|^2\opnorm{\mathcal C(j)}_2^2.
\end{align*}
We have that $\sup_{j\in\mathbb Z}\{|j|\opnorm{\mathcal C(j)}_2^2\}<\infty$ since $\sum_{h=-\infty}^\infty\opnorm{\mathcal C(h)}_2^2<\infty$. Hence, $2n^{-2}|j|^2\opnorm{\mathcal C(j)}_2^2=O(n^{-1})$ as $n\to\infty$ because $|j|<n$. Suppose that $\{e_k\}_{k\ge1}$ and $\{f_k\}_{k\ge1}$ are two orthonormal bases of $\mathbb H$. Using the definition of the Hilbert-Schmidt norm and Parseval's identity,
\begin{align*}
	\operatorname E\opnormBig{\sum_{t=j+1}^n[u_t\otimes u_{t-j}-\mathcal C(j)]}_2^2
	&=\sum_{k=1}^\infty\operatorname E\Bigl\|\sum_{t=j+1}^n[u_t\otimes u_{t-j}-\mathcal C(j)](e_k)\Bigr\|^2\\
	&=\sum_{k=1}^\infty\operatorname E\Bigl\|\sum_{t=j+1}^n[\langle e_k,u_{t-j}\rangle u_t-\operatorname E[\langle e_k,u_0\rangle u_j]\Bigr\|^2\\
	&=\sum_{k,l=1}^\infty\operatorname E\Bigl|\sum_{t=j+1}^n[\langle e_k,u_{t-j}\rangle\langle u_t,f_l\rangle-\operatorname E[\langle e_k,u_0\rangle\langle u_j,f_l\rangle]\Bigr|^2\\
	&=\sum_{k,l=1}^\infty\sum_{s,t=j+1}^n\operatorname{Cov}[\langle e_k,u_{s-j}\rangle\langle u_s,f_l\rangle,\langle e_k,u_{t-j}\rangle\langle u_t,f_l\rangle].
\end{align*}
Since $u_t$'s have zero means, we obtain
\begin{align*}
	\operatorname{Cov}[\langle e_k,u_{s-j}\rangle\langle u_s,f_l\rangle,\langle e_k,u_{t-j}\rangle\langle u_t,f_l\rangle]
	&=\operatorname{cum}[\langle e_k,u_{s-j}\rangle,\langle u_s,f_l\rangle,\langle e_k,u_{t-j}\rangle,\langle u_t,f_l\rangle]\\
	&+\operatorname E[\langle e_k,u_{s-j}\rangle\langle e_k,u_{t-j}\rangle]\operatorname E[\langle u_s,f_l\rangle\langle u_t,f_l\rangle]\\
	&+\operatorname E[\langle e_k,u_{s-j}\rangle\langle u_t,f_l\rangle]\operatorname E[\langle u_s,f_l\rangle\langle e_k,u_{t-j}\rangle]	
\end{align*}
(see \cite[p.~34]{rosenblatt:1985}). Using the fact that $u_t$'s are fourth order stationary and \eqref{eq:cumoper},
\begin{align*}
	&n^{-1}\sum_{k,l=1}^\infty\sum_{s,t=j+1}^n\operatorname{cum}[\langle e_k,u_{s-j}\rangle,\langle u_s,f_l\rangle,\langle e_k,u_{t-j}\rangle,\langle u_t,f_l\rangle]=\\
	&=\sum_{k,l=1}^\infty\sum_{h=-(n-1)}^{n+1}(1-|h|/n)\operatorname{cum}[\langle e_k,u_0\rangle,\langle u_j,f_l\rangle,\langle e_k,u_h\rangle,\langle u_{h+j},f_l\rangle]\\
	&=\sum_{h=-(n-1)}^{n-1}(1-|h|/n)\sum_{k,l=1}^\infty\langle\mathcal K_{h+j,h,j}(f_l\otimes e_k),f_l\otimes e_k\rangle\\
	&\le\sum_{h=-(n-1)}^{n-1}(1-|h|/n)\opnorm{\mathcal K_{h+j,h,j}}_1,
\end{align*}
\begin{align*}
	&n^{-1}\sum_{k,l=1}^\infty\sum_{s,t=j+1}^n\operatorname E[\langle e_k,u_{s-j}\rangle\langle e_k,u_{t-j}\rangle]\operatorname E[\langle u_s,f_l\rangle\langle u_t,f_l\rangle]=\\
	&=\sum_{k,l=1}^\infty\sum_{h=-(n-1)}^{n-1}(1-|h|/n)\operatorname E[\langle e_k,u_0\rangle\langle e_k,u_h\rangle]\operatorname E[\langle u_0,f_l\rangle\langle u_h,f_l\rangle]\\
	&=\sum_{h=-(n-1)}^{n-1}(1-|h|/n)\sum_{k=1}^\infty\langle(\mathcal C(h))(e_k),e_k\rangle\sum_{l=1}^\infty\langle(\mathcal C(h))(f_l),f_l\rangle\\
	&\le\sum_{h=-(n-1)}^{n-1}(1-|h|/n)\opnorm{\mathcal C(h)}_1^2
\end{align*}
and, using the Cauchy-Schwarz inequality,
\begin{align*}
	&n^{-1}\sum_{k,l=1}^\infty\sum_{s,t=j+1}^n\operatorname E[\langle e_k,u_{s-j}\rangle\langle u_t,f_l\rangle]\operatorname E[\langle u_s,f_l\rangle\langle e_k,u_{t-j}\rangle]=\\
	&=\sum_{k,l=1}^\infty\sum_{h=-(n-1)}^{n-1}(1-|h|/n)\operatorname E[\langle e_k,u_0\rangle\langle u_{h+j},f_l\rangle]\operatorname E[\langle u_0,f_l\rangle\langle e_k,u_{h-j}\rangle]\\
	&=\sum_{h=-(n-1)}^{n-1}(1-|h|/n)\sum_{k,l=1}^\infty\langle(\mathcal C(h+j))(e_k),f_l\rangle\langle e_k,\langle f_l,(\mathcal C(h-j))(f_l)\rangle\\
	&\le\sum_{h=-(n-1)}^{n-1}(1-|h|/n)\Bigl[\sum_{k,l=1}^\infty|\langle(\mathcal C(h+j))(e_k),f_l\rangle|^2\Bigr]^{1/2}\Bigl[\sum_{k,l=1}^\infty|\langle e_k,\langle f_l,(\mathcal C(h-j))(f_l)\rangle|^2\Bigr]^{1/2}\\
	&=\sum_{h=-(n-1)}^{n-1}(1-|h|/n)\opnorm{\mathcal C(h+j)}_2\opnorm{\mathcal C(h-j)}_2.
\end{align*}
Let us observe that
\[
	\sum_{h=-(n-1)}^{n-1}\opnorm{\mathcal C(h+j)}_2\opnorm{\mathcal C(h-j)}_2
	\le\sum_{h=-\infty}^\infty\opnorm{\mathcal C(h)}_2^2
	\le\sum_{h=-\infty}^\infty\opnorm{\mathcal C(h)}_1^2
	<\infty
\]
using the Cauchy-Schwarz inequality and the fact that $\opnorm{A}_2\le\opnorm{A}_1$ for a nuclear operator $A$. The proof is complete.
\end{proof}

We also establish consistency in mean integrated square error of the kernel lag-window estimator of the spectral density operator.
\begin{lemma}\label{lemma:kernel_est_consist}
Suppose that $\{u_t\}_{t\in\mathbb Z}$ satisfies \autoref{assumption:dependentseq}, \autoref{assumption:kernel} and \ref{assumption:band} hold. Then
\[
	\int_{-\pi}^\pi\operatorname E\opnorm{\hat{\mathcal F}_n(\omega)-\mathcal F(\omega)}_2^2d\omega=o(1)\quad\text{as}\quad n\to\infty.
\]
\end{lemma}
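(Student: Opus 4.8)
The plan is to convert the mean integrated squared error into a sum over lags by Parseval's identity and then to perform a standard bias--variance decomposition. Since the functions $e_j(\omega)=e^{-\mathbf i j\omega}$, $j\in\mathbb Z$, are orthogonal in $L^2[-\pi,\pi]$, writing
\[
	\hat{\mathcal F}_n(\omega)-\mathcal F(\omega)
	=(2\pi)^{-1}\sum_{|j|<n}\bigl(k(j/p_n)\hat{\mathcal C}_n(j)-\mathcal C(j)\bigr)e^{-\mathbf i j\omega}
	-(2\pi)^{-1}\sum_{|j|\ge n}\mathcal C(j)e^{-\mathbf i j\omega}
\]
and using $\opnorm{\cdot}_2^2=\langle\cdot,\cdot\rangle_{\mathrm{HS}}$ together with Tonelli's theorem (the integrand is nonnegative), one obtains
\[
	\int_{-\pi}^\pi\operatorname E\opnorm{\hat{\mathcal F}_n(\omega)-\mathcal F(\omega)}_2^2d\omega
	=(2\pi)^{-1}\sum_{|j|<n}\operatorname E\opnorm{k(j/p_n)\hat{\mathcal C}_n(j)-\mathcal C(j)}_2^2
	+(2\pi)^{-1}\sum_{|j|\ge n}\opnorm{\mathcal C(j)}_2^2 .
\]
The last sum tends to $0$ as the tail of a convergent series, since $\sum_{j\in\mathbb Z}\opnorm{\mathcal C(j)}_2^2\le\sum_{j\in\mathbb Z}\opnorm{\mathcal C(j)}_1^2<\infty$ under \autoref{assumption:dependentseq}.

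For the first sum I would decompose $k(j/p_n)\hat{\mathcal C}_n(j)-\mathcal C(j)=k(j/p_n)\bigl(\hat{\mathcal C}_n(j)-\mathcal C(j)\bigr)+\bigl(k(j/p_n)-1\bigr)\mathcal C(j)$ and apply $(a+b)^2\le 2a^2+2b^2$, which bounds it by
\[
	2\sum_{|j|<n}k^2(j/p_n)\operatorname E\opnorm{\hat{\mathcal C}_n(j)-\mathcal C(j)}_2^2
	+2\sum_{|j|<n}\bigl(k(j/p_n)-1\bigr)^2\opnorm{\mathcal C(j)}_2^2 .
\]
For the first (variance) term, \autoref{lemma:cov_con} gives $\sup_{j}\operatorname E\opnorm{\hat{\mathcal C}_n(j)-\mathcal C(j)}_2^2=O(n^{-1})$, so it remains to check $\sum_{|j|<n}k^2(j/p_n)=O(p_n)$; this follows from $|k|\le 1$ (so that the contribution of the indices $|j|\le p_n$ is $O(p_n)$) together with the tail bound $k(x)=O(x^{-\alpha})$, $\alpha>1/2$, of \autoref{assumption:kernel}, since $\sum_{|j|>p_n}k^2(j/p_n)=O\bigl(p_n^{2\alpha}\sum_{|j|>p_n}|j|^{-2\alpha}\bigr)=O(p_n)$ because $2\alpha>1$. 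Hence the variance term is $O(p_n/n)=o(1)$ by \autoref{assumption:band}.

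For the second (bias) term, I would note that $\bigl(k(j/p_n)-1\bigr)^2\opnorm{\mathcal C(j)}_2^2\le 4\opnorm{\mathcal C(j)}_2^2$, which is summable in $j$, and that $k$ is continuous at $0$ with $k(0)=1$, so $\bigl(k(j/p_n)-1\bigr)^2\to0$ for each fixed $j$ as $p_n\to\infty$; dominated convergence for series then shows this term tends to $0$. Combining the three pieces gives the claim. There is no essential obstacle here — the argument is the textbook bias--variance split for lag-window estimators — and the only points needing a little care are the kernel sum estimate $\sum_j k^2(j/p_n)=O(p_n)$ (relying on the polynomial decay of $k$) and checking that truncating the Fourier series at $|j|<n$ only contributes the harmless summable tail $\sum_{|j|\ge n}\opnorm{\mathcal C(j)}_2^2$.
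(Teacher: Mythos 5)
Your proposal is correct and follows essentially the same route as the paper's proof: the Parseval/orthogonality reduction to a sum over lags, the same $(a+b)^2\le 2a^2+2b^2$ split into variance and bias terms, \autoref{lemma:cov_con} for the variance part, dominated convergence for the bias part, and the summable tail $\sum_{|j|\ge n}\opnorm{\mathcal C(j)}_2^2$. The only difference is that you make explicit the kernel-sum bound $\sum_{|j|<n}k^2(j/p_n)=O(p_n)$ via the decay condition of \autoref{assumption:kernel}, a step the paper uses implicitly.
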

\begin{proof}
Using definitions \eqref{eq:dfn_sdoper} and \eqref{eq:dfn_sdoper_est} as well as the fact that the functions $\{e_k\}_{k\in\mathbb Z}$ defined by $e_k(x)=e^{-\mathbf ikx}$ for $k\in\mathbb Z$ and $x\in[-\pi,\pi]$ are orthonormal in $L^2[-\pi,\pi]$, we obtain
\begin{align}
	&2\pi\int_{-\pi}^\pi\operatorname E\opnorm{\hat{\mathcal F}_n(\omega)-\mathcal F(\omega)}_2^2d\omega=\notag\\
	&=(2\pi)^{-1}\int_{-\pi}^\pi\operatorname E\opnormBig{\sum_{|j|<n}k_{nj}\hat{\mathcal C}_n(j)e^{-\mathbf ij\omega}-\sum_{j\in\mathbb Z}\mathcal C(j)e^{-\mathbf ij\omega}}_2^2d\omega\notag\\
	&=\sum_{|j|<n}\operatorname E\opnorm{k_{nj}\hat{\mathcal C}_n(j)-\mathcal C(j)}_2^2-\sum_{j\ge|n|}\opnorm{\mathcal C(j)}_2^2\notag\\
	&\le2\sum_{|j|<n}k_{nj}^2\operatorname E\opnorm{\hat{\mathcal C}_n(j)-\mathcal C(j)}_2^2
	+2\sum_{|j|<n}(k_{nj}-1)^2\opnorm{\mathcal C(j)}_2^2
	-\sum_{j\ge|n|}\opnorm{\mathcal C(j)}_2^2.\label{eq:consist}
\end{align}
Since $\sup_{j\in\mathbb Z}\operatorname E\opnorm{\hat{\mathcal C}_n(j)-\mathcal C(j)}_2^2=O(n^{-1})$ (see \autoref{lemma:cov_con}) and $p_n/n\to0$, the first term in \eqref{eq:consist} is $o(1)$ as $n\to\infty$. Given that $k_{nj}-1\to0$ as $p_n\to\infty$, the second term is $o(1)$ as $n\to\infty$ using Lebesgue's dominated convergence theorem. Finally, $\sum_{j\ge|n|}\opnorm{\mathcal C(j)}_2^2=o(1)$ as $n\to\infty$ since $\sum_{j=-\infty}^\infty\opnorm{\mathcal C(j)}_2^2<\infty$. The proof is complete.
\end{proof}
Now we are ready to prove \autoref{thm:consistency}.
\begin{proof}[Proof of  \autoref{thm:consistency}]
We have that
\[
	(p_n^{1/2}/n)T_n
	=\frac{2^{-1}\hat Q_n^2-\hat\sigma_n^4n^{-1}C_n(k)}{\opnorm{\hat{\mathcal C}_n(0)}_2^2(2p_n^{-1}D_n(k))^{1/2}}.
\]
Given that $p_n\to\infty$ and $p_n/n\to0$, $n^{-1}C_n(k)\to0$ and $p_n^{-1}D_n(k)\to D(k)$ as $n\to\infty$. Thus we need to establish that $Q_n^2\xrightarrow{p}Q^2$ and $\opnorm{\hat{\mathcal C}_n(0)}_2^2\xrightarrow{p}\opnorm{\mathcal C(0)}_2^2$ as $n\to\infty$ to complete the proof. Since
\begin{multline*}
	\hat Q_n^2-Q^2
	=2\pi\int_{-\pi}^\pi\opnorm{\hat{\mathcal F}_n(\omega)-\mathcal F(\omega)}_2^2d\omega
	+\opnorm{\mathcal C(0)-\hat{\mathcal C}_n(0)}_2^2\\
	+4\pi\int_{-\pi}^\pi\operatorname{Re}\langle\hat{\mathcal F}_n(\omega)-\mathcal F(\omega),\mathcal F(\omega)\rangle_{\mathrm{HS}}d\omega
	-2\int_{-\pi}^\pi\operatorname{Re}\langle\hat{\mathcal F}_n(\omega)-\mathcal F(\omega),\hat{\mathcal C}_n(0)\rangle_{\mathrm{HS}}d\omega,
\end{multline*}
we need to show that $2\pi\int_{-\pi}^\pi\opnorm{\hat{\mathcal F}_n(\omega)-\mathcal F(\omega)}_2^2d\omega=o_p(1)$ and $\opnorm{\mathcal C(0)-\hat{\mathcal C}_n(0)}_2=o_p(1)$ as $n\to\infty$, but this follows from \autoref{lemma:kernel_est_consist} and \autoref{lemma:cov_con}. The proof is complete.
\end{proof}

\section{Data driven bandwidth selection}\label{sec:band}

We now describe a data driven approach to determine the bandwidth parameter $p_n$ given a choice of the kernel $k$. In particular, we adapt the plug-in approach of \cite{newey:west:1994} and \cite{buhlmann:1996} to the setting of spectral density operator estimation with general Hilbert space valued time series. We suppose that for some integer $q \ge 1$, the kernel function $k$ satisfies that
$$
0 < \xi = \lim_{x\to 0} |x|^{-q}(1-k(x)) < \infty.
$$
With $\mathcal F(\omega)$ defined above, let
\[
	\mathcal F^{(q)}(\omega)
	=(2\pi)^{-1}\sum_{j\in\mathbb Z}\mathcal |j|^q \mathcal C(j)e^{-\mathbf ij\omega}	
\]
denote the generalized $q$th order derivative of $\mathcal F(\omega)$. We assume $\mathcal F^{(q)}$ is well defined, which holds if, for example, there exists $r\ge q$ so that
\begin{equation*}
\sum_{j=-\infty}^\infty (1+|j|)^{q+r} \opnorm{\mathcal C(j)}_2 < \infty.
\end{equation*}
In this case, under \autoref{assumption:dependentseq}, it follows along similar lines of the proof of Theorem 2.1 of \cite{berkes:horvath:rice:2016} and as in Theorem 3.2 of \cite{panaretos:tavakoli:2013} that
\begin{multline*}
\int_{-\pi}^{\pi}\operatorname E\opnorm{\mathcal F(\omega)-\hat{\mathcal F}_n(\omega)}_2^2d\omega
	= \frac{p_n}{n}\Bigl( \int_{-\pi}^{\pi}\opnorm{ \mathcal F(\omega)}_2^2d\omega + \int_{-\pi}^{\pi}\operatorname{tr}( \mathcal F(\omega) )^2 d\omega \Bigr)\int_{-\infty}^{\infty}k^2(x)dx \\
+p_n^{-2{q}}\int_{-\pi}^{\pi}\opnorm{{\xi}\mathcal{F}^{(q)}(\omega)}_2^2 d\omega+o(p_n/n + p_n^{-2q}).
\end{multline*}
A method to select the bandwidth is to attempt to minimize as a function of the bandwidth the leading terms in this asymptotic expansion. Doing so gives
$$
p_{n,opt} = {\mathfrak{c}_q} n^{1/(2q+1)},
$$
where
$$
\mathfrak{c}_q^{(2q+1)} = \frac{ 2q \xi^2 \int_{-\pi}^{\pi}\opnorm{\mathcal{F}^{(q)}(\omega)}^2 d\omega}
{\int_{-\infty}^{\infty}k^2(x)dx[\int_{-\pi}^{\pi} \opnorm{ \mathcal F(\omega)}^2d\omega + \int_{-\pi}^{\pi}  \operatorname{tr}( \mathcal F(\omega) )^2 d\omega]}.
$$
The constant $\mathfrak{c}_q$ can be estimated as follows.

{\textbf {\textit{Bandwidth selection method for spectral density operator estimation:} }}
\begin{enumerate}

\item Compute pilot estimates of $\mathcal{F}^{(r)}$, for $r=0,q$:
\[
	\hat{\mathcal F}^{(r)}_{n,\star}(\omega)
	=(2\pi)^{-1}\sum_{|j|<n}|j|^r k_{ini}^{(r)}(j/p^{(r)}_{n,ini})\hat{\mathcal C}_n(j)e^{-ij\omega},
\]
that utilize an initial bandwidth choice $p_{n,ini}^{(r)}$, and weight function $k_{ini}^{(r)}$.
\item Estimate ${\frak c}_q$ by
$$
\hat{\mathfrak{c}}_q^{(2q+1)} = \frac{ 2q \xi^2 \int_{-\pi}^{\pi}\opnorm{\hat{\mathcal{F}}_{n,\star}^{(q)}(\omega)}^2 d\omega}
{\int_{-\infty}^{\infty}k^2(x)dx[\int_{-\pi}^{\pi} \opnorm{ \hat{\mathcal F}_{n,\star}(\omega)}^2d\omega + \int_{-\pi}^{\pi}\operatorname{tr}( \hat{\mathcal F}_{n,\star}(\omega) )^2 d\omega]}.
$$
\item Use the bandwidth
\[
	\hat{p}_{n,opt}
	=\hat{\frak c}_q n^{1/(1+2{ q})}.
\]
\end{enumerate}
In our implementation to produce the results in Section \ref{simul}, we took $k_{ini}^{(r)}$, $r=0,q$, to be the same as $k$, $p^{(0)}_{n,ini} = n^{1/(2q+1)}$, and  $p^{(q)}_{n,ini} = 4n^{1/(2q+1)}$. It is advisable to take a somewhat larger bandwidth to estimate ${\mathcal F}^{(q)}$ due to the fact that the summands are enlarged by the terms of the form $|j|^q$.
}

\end{document}